\newcommand\blfootnote[1]{%
  \begingroup
  \renewcommand\thefootnote{}\footnote{#1}%
  \addtocounter{footnote}{-1}%
  \endgroup
}
\definecolor{dblue}{rgb}{0,0,.6}
\newcommand{\set}[1]{\left\{ #1 \right\}}
\newcommand{\can}{\textnormal{can}}
\newcommand{\Sch}{\mathsf{Sch}}
\newcommand{\ZZ}{\mathbb{Z}}
\newcommand{\id}{\textnormal{id}}
\newcommand{\Hom}{\textnormal{Hom}}
\newcommand{\Gal}{\textnormal{Gal}}
\newcommand{\Spec}{\textnormal{Spec}}
\newcommand{\ca}[1]{{\mathcal{#1}}}
\newcommand{\mr}[1]{{\mathscr{#1}}}
\newcommand{\tn}[1]{{\textnormal{#1}}}
\let\rm\relax 
\newcommand{\rm}[1]{{\mathrm{#1}}}
\DeclareSymbolFont{bbm}{U}{bbm}{m}{n}
\DeclareSymbolFontAlphabet{\mathbbm}{bbm}
\let\rm\relax 
\newcommand{\rm}[1]{{\mathrm{#1}}}
\numberwithin{equation}{section}
\newtheorem{theorem}{Theorem}[section]
\theoremstyle{plain}
\newtheorem{corollary}[theorem]{Corollary}
\newtheorem{lemma}[theorem]{Lemma}
\newtheorem{proposition}[theorem]{Proposition}
\theoremstyle{definition}
\newtheorem{definition}[theorem]{Definition}
\newtheorem{example}[theorem]{Example}
\newcommand*{\triple}[2][.1ex]{%
  \mathrel{\vcenter{\offinterlineskip%
  \hbox{$#2$}\vskip#1\hbox{$#2$}\vskip#1\hbox{$#2$}}}}
  \newcommand*{\triplerightarrow}{\triple{\rightarrow}}
\newcommand{\refsymbol}{{\ensuremath \Swarrow }}
\begin{document}    

\title[Descent for algebraic stacks]{Descent for algebraic stacks}

\author{Olivier de Gaay Fortman} 
\address{Department of Mathematics, Utrecht University, Budapestlaan 6, 3584 CD Utrecht, The Netherlands}
\email{a.o.d.degaayfortman@uu.nl}

\maketitle

\date{\today}

\begin{abstract} 
We prove that algebraic stacks satisfy $2$-descent for fppf coverings. We generalize Galois descent for schemes to stacks, by considering the case where the fppf covering is a finite Galois covering, and reformulating $2$-descent data in terms of Galois group actions on the stack. 
\end{abstract}

\section{Introduction} \label{section:introduction}
\blfootnote{\today} Let $S' \to S$ be a morphism of affine schemes, faithfully flat and locally of finite presentation. By a theorem of Grothendieck, the functor $X \mapsto X \times_SS'$ 
induces an equivalence of categories between the category of $S$-schemes $X$ and the category of pairs $(X', \phi)$ where $X'$ is an $S'$-scheme and $\phi$ a descent datum for $X'$ over $S'$ such that $X'$ admits an open covering by $S'$-affine schemes which are stable under $\phi$ (cf.\ \cite{groth-descente}). In the case when $S = \Spec(k)$, $S' = \Spec(k')$, and the morphism $S' \to S$ corresponds to a finite Galois extension of fields $k \subset k'$, this is known as Galois descent, %
and was proven by Weil (cf.\ \cite{weilgalois}). 


The aim of this paper is to present the most natural analogue of this result in the setting of algebraic stacks. To do this, we build on existing work in descent theory for stacks, especially results by Giraud~\cite{giraud}, Duskin~\cite{Duskin}, and Breen~\cite{breen}. Our main contributions are as follows:
\begin{enumerate}
    \item We provide a reference for descent theory for stacks, explaining how the aforementioned results imply that stacks with descent data over a scheme \( S' \) descend along fppf morphisms \( p \colon S' \to S \).
    \item We show that algebraic stacks descend not just as stacks, but as algebraic stacks; we prove the same for Deligne--Mumford stacks. 
    \item We generalize Galois descent from schemes to stacks by considering the case where the fppf cover is a finite Galois cover, and by attaching 2-descent data to actions of the Galois group on the stack.
\end{enumerate}

\subsection{Main results} In the case of stacks, the analogue of the aforementioned descent-theory for schemes is a notion called \emph{$2$-descent}, which appears to have been introduced by Duskin \cite{Duskin}. 
As it turns out, $2$-descent data for algebraic stacks with respect to faithfully flat locally finitely presented morphisms of schemes are always effective.
More precisely, we have the following result. 
For a scheme $S$, let $(\rm{Sch}/S)_{fppf}$ be the big fppf site of $S$ as in \cite[\href{https://stacks.math.columbia.edu/tag/021S}{Tag 021S}]{stacks-project}; a \emph{stack over $S$} is a stack in groupoids $\ca X \to  (\rm{Sch}/S)_{fppf}$, see \cite[\href{https://stacks.math.columbia.edu/tag/0304}{Tag 0304}]{stacks-project}. 

\begin{theorem}  \label{theorem:main:intro}
Let $S' \to S$ be a faithfully flat locally finitely presented morphism of schemes, and let $\ca X'$ be a stack over $S'$. Let $(\phi, \psi)$ be a $2$-descent datum for the stack $\ca X'$ over $S'$, see Definition \ref{definition:2-descent}. Then the following holds. 
\begin{enumerate}
\item
The $2$-descent datum $(\phi, \psi)$  is effective. That is, there exists a stack $\ca X$ over $S$, an isomorphism of stacks over $S'$,
\[
\rho \colon \ca X \times_S S' \xrightarrow{\sim} \ca X',
\]
and a $2$-isomorphism $\chi \colon p_2^\ast \rho \circ \textnormal{can} \Rightarrow \phi \circ p_1^\ast \rho$ as in the following diagram:
\begin{align*} 
\xymatrix{
p_1^\ast(\ca X \times_S S') \ar[r]^{\textnormal{can}} \ar@{}[dr] | {\refsymbol} \ar[d]^{p_1^\ast \rho} & p_2^\ast(\ca X \times_S S') \ar[d]^{p_2^\ast \rho}\\
p_1^\ast \ca X' \ar[r]^\phi & p_2^\ast \ca X',
}
\end{align*}
such that the natural compatibility between $\chi$ and $\psi$ is satisfied.
\item 
The stack $\ca X'$ over $S'$ is an algebraic stack over $S'$ if and only if the stack $\ca X$ over $S$ is an algebraic stack over $S$.
\item 
The stack $\ca X'$ over $S'$ is a Deligne--Mumford stack over $S'$ if and only if the stack $\ca X$ over $S$ is a Deligne--Mumford stack over $S$.
\end{enumerate}
\end{theorem}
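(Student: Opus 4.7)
I would construct $\ca X$ explicitly by the $2$-categorical gluing recipe going back to Giraud, Duskin, and Breen. For a scheme $T \to S$, declare $\ca X(T)$ to be the groupoid whose objects are pairs $(x', \alpha)$ with $x' \in \ca X'(T \times_S S')$ and $\alpha \colon \phi(p_1^\ast x') \xrightarrow{\sim} p_2^\ast x'$ an isomorphism in $\ca X'(T \times_S S' \times_S S')$ satisfying the cocycle condition dictated by $\psi$ on $T \times_S S' \times_S S' \times_S S'$; morphisms $(x',\alpha) \to (y',\beta)$ are morphisms $x' \to y'$ in $\ca X'(T \times_S S')$ intertwining the descent data. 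The fibered-groupoid structure over $(\rm{Sch}/S)_{fppf}$ is manifest, and the two stack axioms (descent of isomorphisms and effectivity of descent data) for $\ca X$ reduce, after unwinding, to the stack axioms for $\ca X'$ together with the ordinary fppf effectivity of the $\alpha$'s. This is precisely the assertion, made rigorous in the cited works, that stacks form a $2$-stack on the fppf site; the equivalence $\rho$ and $2$-isomorphism $\chi$ compatible with $\psi$ then emerge from the construction by inspection.

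\textbf{Part (2).} Assume $\ca X'$ is algebraic over $S'$; the converse is immediate by base change along the fppf map $S' \to S$. The diagonal $\Delta_{\ca X/S}$ pulls back along $S' \to S$ to $\Delta_{\ca X'/S'}$, which is representable by algebraic spaces; since representability of morphisms of stacks is fppf-local on the target, $\Delta_{\ca X/S}$ is representable. Now pick a smooth surjection $U' \to \ca X'$ from a scheme. The composite
\[
U' \to \ca X' \xrightarrow{\rho^{-1}} \ca X \times_S S' \to \ca X
\]
is surjective and fppf, being the composite of a smooth surjection with the fppf base change of $p$. Thus $\ca X$ has representable diagonal and admits a surjective fppf morphism from a scheme; the standard result that any such fppf presentation can be refined to a smooth one then shows that $\ca X$ is algebraic.

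\textbf{Part (3) and main obstacle.} For (3), an algebraic stack is Deligne--Mumford precisely when its diagonal is unramified; since unramifiedness is fppf-local on the target, the descent of the diagonal from $\ca X'$ to $\ca X$ carries the unramified property across, and combined with (2) we conclude. The main difficulty is concentrated in the verification of (1): one must carefully distinguish descent of morphisms from descent of objects in the construction, and verify that the coherence data $\psi$ genuinely yields a cocycle condition for $\alpha$ on $T \times_S S' \times_S S' \times_S S'$, with all higher compatibilities automatic. Once $\ca X$ is in hand, the rest is essentially formal, as every property in sight (representability, fppfness, smoothness, unramifiedness) is either fppf-local or transparently preserved by the constructions.
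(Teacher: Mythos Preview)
Your proposal is correct and follows essentially the same strategy as the paper. For Part~(1) you spell out the explicit ``descended groupoid'' construction that the paper alludes to but ultimately delegates to Breen's result that $\underline{Stack}_S$ is a $2$-stack; for Part~(2) your argument matches the paper's Lemma on fppf-locality of the algebraic property almost verbatim (descend representability of the diagonal, produce an fppf presentation $U' \to \ca X$, then invoke Artin's theorem on flat groupoids, which is your ``standard refinement to a smooth presentation'').

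The only genuine divergence is in Part~(3): you use the clean characterization ``Deligne--Mumford $\Leftrightarrow$ unramified diagonal'' and the fppf-locality of unramifiedness, whereas the paper threads through a longer chain of Stacks Project tags relating the various notions of DM (absolute, relative, over $\Spec(\ZZ)$). Your route is more direct and arguably preferable; the paper's route has the minor advantage of not needing to first invoke Part~(2) to know $\ca X$ is algebraic before applying the diagonal criterion, but in practice both arguments are short and equivalent in strength.
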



Note that even the case where $\ca X'$ is a scheme seems to yield a non-trivial result (cf.\ Corollary \ref{cor:corollary}). Of course, in some sense these results are not surprising: the descended stack $\ca X$ is obtained by defining $\ca X(T)$ as the groupoid of objects of $\ca X'(T \times_SS')$ equipped with a descent datum relative to the $2$-descent datum of $\ca X'$, for any scheme $T$ over $S$. More precisely, the first assertion in the above theorem follows from:
\begin{theorem}[Breen, Giraud] \label{thm:breen}
 Consider the $2$-fibred category $$\underline{Stack}_S \to (\rm{Sch}/S)_{fppf},$$ whose fibre over $U \in (\rm{Sch}/S)_{fppf}$ is the $2$-category $\underline{Stack}(U)$ of stacks over $U$. Then $\underline{Stack}_S$ is a $2$-stack over $S$.
\end{theorem}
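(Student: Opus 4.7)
The plan is to unpack the definition of $2$-stack into its three constituent layers and verify each in turn. Being a $2$-stack over $S$ requires, for every fppf covering $\{U_i \to U\}$ in $(\rm{Sch}/S)_{fppf}$, three conditions: (i) $2$-morphisms between fixed $1$-morphisms of stacks glue (a sheaf condition); (ii) $1$-morphisms of stacks with compatible descent data on the cover glue to a single $1$-morphism over $U$ (a stack condition on Hom-categories); and (iii) stacks over the $U_i$ equipped with $2$-descent data glue to a stack over $U$ ($2$-effectivity).

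Conditions (i) and (ii) I would reduce to the classical fact that, for stacks $\ca X, \ca Y$ on $U$, the assignment $V \mapsto \underline{\textnormal{Hom}}_V(\ca X|_V, \ca Y|_V)$ is itself a stack in groupoids on $(\rm{Sch}/U)_{fppf}$. Given this, (i) reduces to the statement that natural transformations of fibred categories are detected objectwise, and (ii) is precisely the statement of the stack condition on $\underline{\textnormal{Hom}}$. The stack-of-homomorphisms fact goes back to \cite{giraud}; it may also be derived directly from the descent properties satisfied by $\ca X|_V$ and $\ca Y|_V$ themselves.

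The main content is (iii), which I would establish by explicit construction. Given a $2$-descent datum $(\phi, \psi)$ for $\ca X'$ over $S'$, I would define the candidate descended stack $\ca X$ as follows: for a scheme $T \in (\rm{Sch}/S)_{fppf}$, writing $T' := T \times_S S'$, let $\ca X(T)$ be the groupoid of pairs $(x', \sigma)$, with $x' \in \ca X'(T')$ and $\sigma \colon \phi_{T'}(p_1^\ast x') \xrightarrow{\sim} p_2^\ast x'$ an isomorphism in $\ca X'(T' \times_T T')$ satisfying the cocycle condition dictated by $\psi$ on $T' \times_T T' \times_T T'$; morphisms in $\ca X(T)$ are arrows in $\ca X'(T')$ commuting with the chosen $\sigma$'s. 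I would then verify that $\ca X$ is a stack on $(\rm{Sch}/S)_{fppf}$ and that the natural functor $\ca X \times_S S' \to \ca X'$, $(x', \sigma) \mapsto x'$, is an equivalence, equipped with a canonical $2$-isomorphism $\chi$ compatible with $\psi$.

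The main obstacle will be the $2$-categorical coherence bookkeeping: one must check that the tetrahedron axiom satisfied by $\psi$ on the $4$-fold fibre product of $S'$ over $S$ translates precisely into the cocycle condition on $\sigma$ that both makes $\ca X$ satisfy descent along further fppf covers of $S$ and ensures the required compatibility between $\chi$ and $\psi$. Conceptually the cleanest formulation is to encode a $2$-descent datum as a pseudofunctor from the truncated \v{C}ech nerve of $S' \to S$ into the $2$-category of stacks, and to recognize $\ca X$ as the $2$-limit of this pseudofunctor — the exact $2$-categorical analogue of reconstructing a sheaf from its restrictions to a cover. The detailed verifications of these $2$-limit statements are carried out carefully by Giraud, Duskin, and Breen in \cite{giraud, Duskin, breen}, and I would appeal to their treatments for the coherence checks rather than reproduce them in full.
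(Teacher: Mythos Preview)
Your proposal is correct and aligns with the paper's approach: the paper's proof of this theorem is simply a citation to \cite[Example 1.11.(1)]{breen} and \cite[Chapitre II, \S 2.1.5]{giraud}, and you ultimately appeal to the same sources after sketching the three-layer verification and the explicit construction of the descended stack (which the paper also describes informally in the introduction and in the final example). Your outline is a faithful expansion of what those references contain, so there is no substantive divergence.
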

\begin{proof}
See \cite[Example 1.11.(1)]{breen} and \cite[Chapitre II, \S 2.1.5]{giraud}. 
\end{proof}

The other two assertions in Theorem \ref{theorem:main:intro} follow from the fact that the property of a stack of being algebraic (resp.\ Deligne--Mumford) is local on the base for the fppf (resp.\ \'etale) topology; see Lemma \ref{lemma:local} for a precise statement. 
For details, we refer to Section \ref{sec:descending-stacks}. 

Assume that $S' \to S$ is a finite surjective étale morphism of schemes which is a Galois covering with Galois group $\Gamma$, acting on the left on $S'$. 
Then for a stack $\ca X'$ over $S'$, one can reformulate the notion of $2$-descent datum for $\ca X'$ over $S'$ in terms of an action of $\Gamma$ on $\ca X'$ over the action of $\Gamma$ on $S'$ over $S$, as in the classical case. To explain this, 
for an element $\sigma \in \Gamma$, define ${^\sigma \ca X'}$ as the fibre product $\ca X' \times_{S', \sigma} S'$, viewed as a stack over $S'$ whose structure morphism ${^\sigma \ca X'} \to S'$ is the pull-back of the morphism $\ca X' \to S'$ along $\sigma \colon S' \to S'$.
\begin{definition}\label{def:Galois2descent}Let $S' \to S$ be a finite surjective étale morphism of schemes which is a Galois covering with Galois group $\Gamma$. Let $\ca X'$ be a stack over $S'$. 
A \emph{Galois $2$-descent datum} consists of: 
\begin{enumerate}
\item a family of $1$-isomorphisms of $S'$-stacks $f_\sigma \colon {^\sigma \ca X'} \xrightarrow{\sim} \ca X'$ $(\sigma \in \Gamma)$; 
\item a family of $2$-isomorphisms $\psi_{\tau, \sigma} \colon f_\sigma \circ {^\sigma f_\tau} \Rightarrow f_{\tau\sigma}$ $((\tau, \sigma) \in \Gamma \times \Gamma)$; 
\end{enumerate}
such that for each $(\gamma, \tau, \sigma) \in \Gamma \times \Gamma \times \Gamma$, the diagram of $2$-morphisms
\begin{align} \label{galoisdiagram2morphisms}
\begin{split}
\xymatrixcolsep{5pc}
\xymatrixrowsep{2.8pc}
\xymatrix{
f_\sigma \circ {^\sigma f_\tau} \circ {^{\tau \sigma}f_\gamma} \ar@{=>}[d]^{{f_\sigma}_\ast(^\sigma \psi_{\gamma, \tau})} \ar@{=>}[r]^-{(^{\tau \sigma}f_\gamma)^\ast(\psi_{\tau, \sigma})}& f_{\tau\sigma} \circ {^{\sigma \tau }f_\gamma}\ar@{=>}[d]^-{\psi_{\gamma, \tau \sigma}} \\
f_\sigma \circ {^\sigma f_{\gamma \tau}}\ar@{=>}[r]^-{\psi_{\gamma\tau, \sigma}}& f_{\gamma \tau \sigma}
}
\end{split}
\end{align}
is commutative. \end{definition}
A group action (in the sense of \cite[Definition 1.3]{romagny}) of the finite group $\Gamma$ on the stack $\ca X'$ over $S$, which is compatible with the action of $\Gamma$ on $S'$ over $S$, gives rise to a Galois $2$-descent datum for $\ca X'$, see Lemma \ref{lemma:action-and-descent}. Moreover, giving a Galois $2$-descent datum for $\ca X'$ is equivalent to giving $2$-descent datum for $\ca X'$, see Lemma \ref{lemma:set-can-bij}. As a corollary of Theorem \ref{theorem:main:intro}, one therefore obtains:
\begin{theorem} \label{theorem:galois}
Let $S' \to S$ be a finite surjective étale morphism of schemes which is a Galois covering with Galois group $\Gamma$. Let $\ca X'$ be an algebraic stack over $S'$, equipped with a Galois $2$-descent datum $( f_\sigma$ $(\sigma \in \Gamma)$, $\psi_{\tau, \sigma}$ $(\tau, \sigma \in \Gamma))$. Then there exists an algebraic stack $\ca X$ over $S$ and an isomorphism $\rho \colon \ca X \times_S S' \xrightarrow{\sim} \ca X'$ of stacks over $S'$. The stack $\ca X$ is Deligne--Mumford if and only if $\ca X'$ is. 
\end{theorem}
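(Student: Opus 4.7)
The plan is to reduce Theorem \ref{theorem:galois} directly to Theorem \ref{theorem:main:intro} by converting the Galois $2$-descent datum into an ordinary $2$-descent datum, which is the content of the announced Lemma \ref{lemma:set-can-bij}. Once this translation is in hand, the algebraicity and Deligne--Mumford assertions are immediate from parts (2) and (3) of Theorem \ref{theorem:main:intro}. Note first that a finite surjective \'etale morphism is faithfully flat and locally of finite presentation, so $S' \to S$ qualifies as an fppf cover to which Theorem \ref{theorem:main:intro} applies.

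The central step is the use of Lemma \ref{lemma:set-can-bij} to produce, from the given Galois $2$-descent datum $(f_\sigma, \psi_{\tau, \sigma})$, a $2$-descent datum $(\phi, \psi)$ for $\ca X'$ over $S'$ in the sense of Definition \ref{definition:2-descent}. The underlying mechanism is the classical identification
\[
S' \times_S S' \;\xrightarrow{\sim}\; \bigsqcup_{\sigma \in \Gamma} S', \qquad (s_1, s_2) \longmapsto (s_1, \sigma) \text{ with } s_2 = \sigma(s_1),
\]
and its analogue
\[
S' \times_S S' \times_S S' \;\xrightarrow{\sim}\; \bigsqcup_{(\tau, \sigma) \in \Gamma \times \Gamma} S'.
\]
Under the first identification, $p_1^\ast \ca X'$ decomposes as $\bigsqcup_\sigma \ca X'$ and $p_2^\ast \ca X'$ as $\bigsqcup_\sigma {^\sigma \ca X'}$, so the family $(f_\sigma)_{\sigma \in \Gamma}$ assembles into a single $1$-isomorphism $\phi \colon p_1^\ast \ca X' \xrightarrow{\sim} p_2^\ast \ca X'$. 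Under the second identification, the cocycle $2$-isomorphism $\psi$ on the triple fibre product is defined componentwise by the $\psi_{\tau, \sigma}$, and the commutativity of \eqref{galoisdiagram2morphisms} for every triple $(\gamma, \tau, \sigma)$ is precisely the $2$-cocycle condition on $\psi$ over $S' \times_S S' \times_S S' \times_S S'$.

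I would then apply Theorem \ref{theorem:main:intro}(1) to the pair $(\phi, \psi)$ to obtain a stack $\ca X$ over $S$ together with an isomorphism $\rho \colon \ca X \times_S S' \xrightarrow{\sim} \ca X'$ of stacks over $S'$. Since $\ca X'$ is algebraic, Theorem \ref{theorem:main:intro}(2) implies that $\ca X$ is an algebraic stack over $S$, and Theorem \ref{theorem:main:intro}(3) gives the Deligne--Mumford equivalence in both directions. The main obstacle, in my view, lies not in this short argument but rather in the honest verification of Lemma \ref{lemma:set-can-bij}: one must check carefully that the simplicial coherence on the discrete groupoid $\Gamma \times \Gamma \times \Gamma$ expressed by \eqref{galoisdiagram2morphisms} matches the $p_{12}, p_{23}, p_{13}$-coherence on $S' \times_S S' \times_S S'$ under the combinatorial identifications above. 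This is a bookkeeping exercise, but one that must be executed with care to align the orderings and pullback conventions.
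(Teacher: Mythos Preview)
Your proposal is correct and follows exactly the paper's approach: the paper's proof of Theorem \ref{theorem:galois} is the single line ``See Theorem \ref{theorem:main:intro} and Lemma \ref{lemma:set-can-bij},'' and you have unpacked precisely this reduction. One minor bookkeeping remark: the paper uses the identification $(\sigma,x)\mapsto(\sigma x,x)$ of $\Gamma\times S'$ with $S''$, so that $p_1^\ast\ca X'=\coprod_\sigma{^\sigma\ca X'}$ and $p_2^\ast\ca X'=\coprod_\sigma\ca X'$, the reverse of your convention; this is harmless but worth aligning when you carry out the verification you flag at the end.
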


Observe that the statement in Theorem \ref{theorem:galois} can be made a bit more precise. Namely, with notation and assumptions as in the theorem, there exists an isomorphism of stacks $\rho \colon \ca X \times_S S' \xrightarrow{\sim} \ca X'$ over $S'$ as well as a family of $2$-isomorphisms $\chi_\sigma \colon \rho \circ \rm{can} \Rightarrow f_\sigma \circ {^\sigma \rho}$ for $\sigma \in \Gamma$ as in the following diagram:
\[
\xymatrix{
{^\sigma \left( \ca X \times_S S' \right)} \ar[r]^-{\rm{can}} \ar[d]_-{^\sigma \rho} & \ca X \times_S S' \ar[d]^-\rho  \ar@{}[dl] | {\refsymbol \; \chi_\sigma} \\
{^\sigma \ca X'} \ar[r]_-{f_\sigma} & \ca X',
}
\]
such that the obvious compatibility conditions are satisfied. 

Returning to the case of an arbitrary faithfully flat locally finitely presented morphism of schemes $S' \to S$, Theorem \ref{thm:breen} shows that the canonical $2$-functor $$\underline{Stack}(S) \to \underline{Stack}(\set{S' \to S}),$$ 
that sends a stack over $S$ to the associated stack $\ca X'$ with canonical $2$-descent datum over $S'$, is an equivalence of $2$-categories.  Here, $\underline{Stack}(\set{S' \to S})$ is the $2$-category of stacks over $S'$ equipped with a $2$-descent datum (see Definitions \ref{definition:2-descent} and \ref{def:morphism}). With regard to morphisms, this has the following consequence.
\begin{proposition} \label{proposition:morphisms}
Let $S' \to S$ be a faithfully flat locally finitely presented morphism of schemes. For $i = 1,2$, let $\ca X_i$ be a stack over $S$, and let $\ca X_i'$ be the base change of $\ca X_i$ along $S' \to S$ equipped with its canonical $2$-descent datum $(\phi_i, \psi_i)$. 
Then the canonical functor
$$
\Hom_S(\ca X_1, \ca X_2) \to \Hom_{\rm{descent}/S'}\left(\left( \ca X_1', \phi_1, \psi_1 \right), \left(\ca X_2', \phi_2, \psi_2 \right) \right)
$$
is an equivalence of categories. 
\end{proposition}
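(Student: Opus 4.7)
The plan is to deduce Proposition \ref{proposition:morphisms} as a formal consequence of Theorem \ref{thm:breen}. The statement that $\underline{Stack}_S$ is a $2$-stack encodes more than the effectivity of $2$-descent for objects: by definition it also demands (a) that for any two stacks $\ca Y_1, \ca Y_2$ over a scheme $T$ in the site, the $T$-fibered category
\[
(V \to T) \longmapsto \Hom_V(\ca Y_1 \times_T V, \ca Y_2 \times_T V)
\]
is itself a stack in groupoids on $(\rm{Sch}/T)_{\rm{fppf}}$, and (b) that for any two objects $f, g$ of this fibered category the presheaf $V \mapsto \Hom_V(f|_V, g|_V)$ of $2$-morphisms is a sheaf. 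Specializing to $T = S$ and the covering $\{S' \to S\}$ is what yields Proposition \ref{proposition:morphisms}.

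First I would unwind condition (a) applied to this covering: it says that the canonical $2$-functor from $\Hom_S(\ca X_1, \ca X_2)$ to the $2$-limit
\[
\Hom_{S'}(\ca X_1', \ca X_2') \;\rightrightarrows\; \Hom_{S' \times_S S'}(p_j^\ast \ca X_1', p_j^\ast \ca X_2')
\]
indexed by the truncated \v{C}ech nerve of $S' \to S$ is an equivalence of groupoids. An object of this $2$-limit consists of a $1$-morphism $f' \colon \ca X_1' \to \ca X_2'$ of $S'$-stacks together with a $2$-isomorphism $\alpha \colon p_2^\ast f' \circ \phi_1 \Rightarrow \phi_2 \circ p_1^\ast f'$ on $S' \times_S S'$, subject to a cocycle condition on $S' \times_S S' \times_S S'$ expressing compatibility with $\psi_1$ and $\psi_2$; a morphism is a $2$-morphism $f' \Rightarrow g'$ over $S'$ compatible with the corresponding $\alpha$'s, which by condition (b) is already determined by its restrictions to the cover. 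The second step is to observe that, by Definition \ref{def:morphism}, this $2$-equalizer coincides with the category $\Hom_{\rm{descent}/S'}\bigl((\ca X_1', \phi_1, \psi_1), (\ca X_2', \phi_2, \psi_2)\bigr)$, and that the equivalence of groupoids furnished by (a) is precisely the canonical functor in the statement.

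The main obstacle is essentially bookkeeping: one must match the abstract description of $\Hom$ in the $2$-stack $\underline{Stack}_S$ via $2$-limits over the \v{C}ech nerve of $S' \to S$ with the concrete recipe of Definition \ref{def:morphism}, and check that the canonical descent data $(\phi_i, \psi_i)$ behave functorially under pullback of morphisms. No new stack-theoretic input beyond Theorem \ref{thm:breen} is required; in particular, algebraicity of the $\ca X_i$ plays no role here, since the statement concerns stacks in groupoids without any further structure.
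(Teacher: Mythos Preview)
Your proposal is correct and follows essentially the same approach as the paper: the paper's proof simply cites \cite[Chapitre II, \S 2.1.5]{giraud} and notes that the statement is a special case of Theorem \ref{thm:breen}, which is precisely the $2$-stack property you unpack. Your argument is a faithful (and more detailed) elaboration of why the $2$-stack condition on $\underline{Stack}_S$ yields the claimed equivalence on Hom-categories.
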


Here, $\Hom_S(\ca X_1, \ca X_2)$ denotes the category of morphisms of stacks $\ca X_1 \to \ca X_2$ over $S$, and $\Hom_{\rm{descent}/S'}(( \ca X_1', \phi_1, \psi_1 ), (\ca X_2', \phi_2, \psi_2 ) )$ the category of morphisms of stacks with $2$-descent data over $S'$ (see Definition \ref{def:morphism}). 

\section*{Acknowledgements}
The author thanks Emiliano Ambrosi and Marta Pieropan for useful discussions and comments, and he thanks the referee for his or her careful reading and valuable comments. This project has received funding from the European Research Council (ERC) under the European Union’s Horizon 2020 research and innovation programme under grant agreement N\textsuperscript{\underline{o}} 948066 (ERC-StG RationAlgic), and 
from 
the ERC Consolidator Grant FourSurf N\textsuperscript{\underline{o}} 101087365.

\section{Descending schemes} \label{section:descendingschemes}
Let 
$
p \colon S' \to S
$
be a morphism of schemes which is faithfully flat and locally of finite presentation. We get a diagram
\[
S^{''} \coloneqq S' \times_{S} S' \overset{p_1}{\underset{p_2}{\rightrightarrows}} S' \to S,
\]
and if $S^{'''} = S' \times_S S' \times_S S'$, we can extend this to the diagram
\[
S^{'''} \triplerightarrow S^{''} \rightrightarrows S^{'} \to S
\]
where the three arrows $S^{'''} \to S^{''}$ are $p_{12}$, $p_{13}$ and $p_{23}$. 

Let $X'$ be a scheme over $S'$. Define 
\[
p_i^\ast X' = X' \times_{S', p_i}S^{''}, \quad p_{jk}^\ast p_i^\ast X' = \left(p_i^\ast X'\right) \times_{S^{''}, p_{jk}} S^{'''}
\]
and note that
$
p_{jk}^\ast p_i^\ast X' = \left(p_i^\ast X'\right) \times_{S^{''}, p_{jk}} S^{'''} = \left(p_i \circ p_{jk}\right)^\ast X'. 
$

Recall that a \emph{descent datum} for $X'/S'$ is an $S^{''}$-isomorphism 
\[
\phi \colon p_1^\ast X' \xrightarrow{\sim} p_2^\ast X'
\]
such that the following diagram commutes:
\[
\xymatrixcolsep{3pc}
\xymatrixrowsep{2.7pc}
\xymatrix{
p_{12}^\ast p_1^\ast X' \ar[r]^{p_{12}^\ast \phi}\ar@{=}[d] & p_{12}^\ast p_2^\ast X' \ar@{=}[r]& p_{23}^\ast p_1^\ast X' \ar[d]^{p_{23}^\ast\phi} \\
p_{13}^\ast p_1^\ast X' \ar[r]^{p_{13}^\ast \phi}&p_{13}^\ast p_2^\ast X&p_{23}^\ast p_2^\ast X'. \ar@{=}[l] 
}
\]
In other words, one requires that 
\[
p_{23}^\ast\phi \circ p_{12}^\ast \phi = p_{13}^\ast \phi \quad \text{ as morphisms } \quad p_{12}^\ast p_1^\ast X' \to p_{13}^\ast p_2^\ast X'.
\]

\begin{theorem}[Grothendieck] \label{theorem:grothendieckdescent}
Let $p \colon S' \to S$ be a faithfully flat locally finitely presented morphism of schemes. The functor
    $
    X \mapsto p^\ast X
    $
defines an equivalence of categories between the category of $S$-schemes $X$ and the category of pairs $(X', \phi)$ where $X'$ is an $S'$-scheme and $\phi$ a descent datum for $X'/S'$ such that $X'$ admits an open covering by $S'$-affine schemes stable under $\phi$.     
\end{theorem}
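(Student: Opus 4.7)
The plan is to reduce the theorem to the affine case and then glue. Concretely, I would first establish descent for quasi-coherent sheaves along $p$: the category of quasi-coherent $\OO_S$-modules is equivalent to the category of quasi-coherent $\OO_{S'}$-modules equipped with a descent datum with respect to $S''/S'$. By working Zariski-locally on $S$ and $S'$, this reduces to the statement that for a faithfully flat ring map $A \to B$, the functor $M \mapsto M \otimes_A B$ gives an equivalence from $A$-modules to $B$-modules with descent data. The heart of this is the exactness of the Amitsur complex
\[
0 \to M \to M \otimes_A B \to M \otimes_A B \otimes_A B,
\]
which is proved by first treating the split case (when a retraction $B \to A$ exists, so the complex is split by an explicit contracting homotopy) and then using faithful flatness to descend exactness from after base change along $A \to B$, where the sequence becomes split by construction.

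Given this, quasi-coherent $\OO_{S'}$-algebras with descent data descend to quasi-coherent $\OO_S$-algebras, so affine $S'$-schemes with descent data descend to affine $S$-schemes. Now let $(X', \phi)$ be a descent datum admitting an open covering $\{U_i'\}$ by $S'$-affine opens stable under $\phi$. Each pair $(U_i', \phi|_{U_i'})$ then descends to an affine $S$-scheme $U_i$. To glue the $U_i$ into an $S$-scheme $X$, I would descend the intersection data $U_{ij}' \coloneqq U_i' \cap U_j'$ together with the open immersions $U_{ij}' \hookrightarrow U_i'$. These also satisfy the stability hypothesis, so they descend as well; the cocycle condition on $\phi$ over the triple product $S'''$ translates into a cocycle condition on the descended gluing maps, and one builds $X$ by the standard gluing construction for schemes. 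That open immersions and cocycle equalities descend requires that the presheaf $T \mapsto \Hom_S(T, Y)$ is an fppf sheaf for each $S$-scheme $Y$, which follows again from affine descent applied to morphisms into affine targets by reducing to the case of algebra homomorphisms.

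Finally, I would verify that the constructed $X$ satisfies $p^\ast X \cong X'$ compatibly with $\phi$, and that the functor $X \mapsto (p^\ast X, \rm{can})$ is fully faithful; full faithfulness is exactly the fppf sheaf property for $\Hom_S(-, Y)$ applied to $p \colon S' \to S$. The main obstacle, and the reason the affine-covering hypothesis cannot be dropped, is that arbitrary affine opens on $X'$ need not be $\phi$-stable, and without $\phi$-stable affines the descended gluing data simply does not exist in the category of schemes. Thus the non-trivial content lies in (i) the exactness of the Amitsur complex, which is the only place faithful flatness is genuinely used, and (ii) the observation that once a $\phi$-stable $S'$-affine open cover is given, all gluing data is forced to descend by naturality.
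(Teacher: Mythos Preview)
The paper does not prove this theorem: its proof consists solely of a citation to Grothendieck's original \emph{Technique de descente} (FGA, Expos\'e~190). So there is nothing to compare your argument against beyond the classical literature itself.

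Your sketch is the standard route taken in that literature (and in SGA~1, Expos\'e~VIII, or \cite[Chapter~6]{BLR}): faithfully flat descent for modules via the Amitsur complex, then for quasi-coherent algebras, hence for affine morphisms, followed by gluing along the given $\phi$-stable $S'$-affine cover; full faithfulness is the fppf sheaf property for $\Hom_S(-,Y)$. This is correct in outline and in spirit.

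One point deserves more care. When you write that the intersections $U'_{ij} = U'_i \cap U'_j$ ``also satisfy the stability hypothesis, so they descend as well,'' note that $U'_{ij}$ is $\phi$-stable but is \emph{not} in general affine over $S'$, so you cannot invoke the affine case recursively. The clean fix is: once $U_i$ has been constructed with $p^\ast U_i \cong U'_i$, a $\phi$-stable open $V' \subset U'_i$ descends to an open $V \subset U_i$ simply by taking $V$ to be the image of $V'$ under the fppf (hence open) map $U'_i \to U_i$; $\phi$-stability guarantees that the preimage of $V$ is exactly $V'$. Alternatively, descend the closed complement via its quasi-coherent ideal sheaf. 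Either way, the property ``open immersion'' is fppf-local on the base, which is what you need here rather than another application of affine descent. With this adjustment your argument is complete.
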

\begin{proof}
See \cite[Theorem 2]{groth-descente} and the discussion below Lemma 1.2 in \emph{loc.\ cit.}
\end{proof}


Next, recall how to make this explicit in the case when $S' \to S$ is a finite surjective étale morphism of schemes which is a Galois covering with Galois group $\Gamma$. For instance, $S$ could be the spectrum of a field $k$, $S'$ the spectrum of a finite Galois extension $k' \supset k$, and $\Gamma$ the Galois group of $k'/k$. Let $X'$ be a scheme over $S'$ and call a \emph{Galois descent datum} any set of isomorphisms 
\[
f_\sigma \colon ^\sigma X' \xrightarrow{\sim} X'
\]
of schemes over $S'$, indexed by $\sigma \in \Gamma$, satisfying the condition that
\[
f_{\tau \sigma} = f_\sigma \circ {^\sigma(f_\tau)} \quad \text{as isomorphisms} \quad ^{\tau\sigma}X' \xrightarrow{\sim} {^\sigma X'} \xrightarrow{\sim}X', \quad \forall \tau, \sigma \in \Gamma.
\]
An action of $\Gamma$ on $X'$ as a scheme over $S$ is said to be \emph{compatible with the action of $\Gamma$ on $S'$ over $S$} if for each $\sigma \in \Gamma$, the composition $X' \xrightarrow{\sigma} X' \to S'$ agrees with the composition $X' \to S' \xrightarrow{\sigma} S'$. 


\begin{lemma} 
Let $p \colon S' \to S$ be a finite surjective étale morphism of schemes. Assume $\pi$ is a Galois covering with Galois group $\Gamma$. Let $X'$ be a scheme over $S'$. 
\begin{enumerate}
\item
Giving a descent datum for $X'$ over $S'$ is equivalent to giving a Galois descent datum for $X'$ over $S'$. 
\item These notions are further equivalent to giving  an action of $\Gamma$ on $X'$ over $S$ compatible with the action of $\Gamma$ on $S'$ over $S$. 
\end{enumerate}
\end{lemma}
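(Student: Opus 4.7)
The plan is to exploit the canonical identification
\[
S'' = S' \times_S S' \;\cong\; \coprod_{\sigma \in \Gamma} S',
\]
valid for any finite étale Galois covering, under which the $\sigma$-th copy is embedded via the graph $\Gamma_\sigma = (\mathrm{id}_{S'}, \sigma) \colon S' \to S''$. This is a standard fact, verifiable for instance on geometric points, and under it one has $p_1 \circ \Gamma_\sigma = \mathrm{id}$ and $p_2 \circ \Gamma_\sigma = \sigma$. Iterating yields $S''' \cong \coprod_{(\tau, \sigma) \in \Gamma \times \Gamma} S'$, with the $(\tau, \sigma)$-copy embedded via $(\mathrm{id}, \sigma, \tau\sigma)$; the three face maps $p_{12}, p_{23}, p_{13}$ restrict on this component to $\Gamma_\sigma$, to the $\sigma$-twist of $\Gamma_\tau$, and to $\Gamma_{\tau\sigma}$, respectively.

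For part (1), given a descent datum $\phi \colon p_1^\ast X' \to p_2^\ast X'$, I would define
\[
f_\sigma \;=\; \left(\Gamma_\sigma^\ast \phi\right)^{-1} \colon {}^\sigma X' \xrightarrow{\sim} X',
\]
using that $(p_i \circ \Gamma_\sigma)^\ast X'$ equals $X'$ for $i = 1$ and ${}^\sigma X'$ for $i = 2$. Conversely, a family $(f_\sigma)_{\sigma \in \Gamma}$ glues along the decomposition of $S''$ to a unique $\phi$, and the two constructions are mutually inverse. Pulling back the scheme-theoretic cocycle $p_{23}^\ast \phi \circ p_{12}^\ast \phi = p_{13}^\ast \phi$ to the $(\tau, \sigma)$-component of $S'''$ unwinds, via the identifications above, to exactly the Galois cocycle $f_\sigma \circ {}^\sigma f_\tau = f_{\tau\sigma}$.

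For part (2), the universal property of the fibre product ${}^\sigma X' = X' \times_{S', \sigma} S'$ identifies $S'$-morphisms $X' \to {}^\sigma X'$ (where the source has its natural structure map $\pi$ and the target the second projection) with $S$-morphisms $\rho_\sigma \colon X' \to X'$ satisfying $\pi \circ \rho_\sigma = \sigma \circ \pi$. Consequently, giving an $S'$-isomorphism $f_\sigma \colon {}^\sigma X' \to X'$ is the same as giving an automorphism $\rho_\sigma$ of $X'$ lying over $\sigma$. Unwinding the definition of ${}^\sigma f_\tau$ as the base change of $f_\tau$ along $\sigma$, the Galois cocycle translates precisely into the associativity relation for the family $(\rho_\sigma)_{\sigma \in \Gamma}$, turning it into a $\Gamma$-action on $X'$ over $S$ compatible with the action on $S'$; the converse direction is obtained by reversing the construction.

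The main obstacle is purely bookkeeping: one has to make the identifications of $S''$ and $S'''$ with disjoint unions of copies of $S'$ sufficiently precise, fix the direction of the isomorphisms $f_\sigma$, and choose a consistent convention for the functor $X' \mapsto {}^\sigma X'$ so that the cocycle condition on $\phi$, the Galois cocycle for $f_\sigma$, and the associativity of $\rho_\sigma$ all match up exactly (including a compatible left- versus right-action convention). Once this is in place, the three formulations become tautologically equivalent through the bijections above.
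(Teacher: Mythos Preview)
Your argument is correct and is exactly the standard one; the paper itself gives no proof here beyond citing \cite[Section 6.2, Example B]{BLR} and \cite[Proposition 4.4.4]{poonen}, and your sketch is precisely what those references do. One minor remark: the paper, in the later proof of Lemma~\ref{lemma:set-can-bij}, uses the opposite convention $(\sigma,x)\mapsto(\sigma x,x)$ for the isomorphism $\Gamma\times S'\cong S''$, so that $\phi$ restricts directly to $f_\sigma$ rather than to $f_\sigma^{-1}$; your convention $(\mathrm{id},\sigma)$ is equally valid and you handle the resulting inversion consistently.
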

\begin{proof}
This is well-known; see e.g.\ \cite[Section 6.2, Example B]{BLR} and \cite[Proposition 4.4.4]{poonen}. 
\end{proof}


\section{Descending algebraic stacks} \label{sec:descending-stacks}

\subsection{Descent for algebraic stacks}
Let $p \colon S' \to S$ be a faithfully flat locally finitely presented morphism of schemes. Let $\mathcal X'$ be a stack in groupoids over $S'$, in the sense of \cite[\href{https://stacks.math.columbia.edu/tag/0304}{Tag 0304}]{stacks-project}. 
Let 
\[
S^{''''} = S' \times_S S' \times_S S' \times_S S';
\]
it is equipped with four projections \begin{align} \label{equation:ri} r_i \colon S^{''''} \to S'.\end{align} Similarly, $S^{'''}$ is equipped with three projections $q_i \colon S^{'''} \to S'$. Note that there are canonical isomorphisms
$
p_{12}^\ast p_1^\ast \ca X'  = (p_1 \circ p_{12})^\ast \ca X' = q_1^\ast \ca X'.
$
Similarly, there are canonical isomorphisms
$
p_{123}^\ast p_{12}^\ast p_1^\ast = \left( p_1 \circ p_{12} \circ p_{123}\right)^\ast = 
r_1^\ast \ca X',
$
of algebraic stacks on $S'$. One has similar isomorphisms relating the other $p_{ijk}^\ast p_{\alpha\beta}^\ast p_\nu^\ast \ca X'$ with $r_\mu^\ast \ca X'$, for $i,j,k \in \set{1,2,3,4}$, $\alpha, \beta \in \set{1,2,3}$, $\nu \in \set{1,2}$ and $\mu \in \set{1,2,3,4}$. 

Consider an isomorphism of $S^{''}$-stacks (i.e.~an equivalence of $\tn{Sch}/S^{''}$-categories):
\[
\phi \colon p_1^\ast \ca X' \to p_2^\ast \ca X',
\]
and let $\psi$ be a $2$-morphism 
$$
\psi \colon p_{23}^\ast \phi \circ p_{12}^\ast \phi \Rightarrow p_{13}^\ast\phi,
$$
which we may picture as the $2$-morphism $\Rightarrow$ in the following diagram:
\begin{small}
\begin{align} \label{diagram:stacks-1}
\begin{split}
\xymatrixcolsep{3pc}	
\xymatrixrowsep{3pc}
\xymatrix{
q_1^\ast \ca X' \ar@{=}[r]  &p_{12}^\ast p_1^\ast \ca X' \ar[r]^{p_{12}^\ast \phi}\ar@{=}[d] & p_{12}^\ast p_2^\ast \ca X' 
\ar@{=}[r] \ar@{}[dr] | {\refsymbol \; \psi\;\;\;\;\;\;\;\;\;\;\;\;\;\;\;\;\;\;} & p_{23}^\ast p_1^\ast \ca X' \ar[d]^{p_{23}^\ast\phi} \ar@{=}[r] & q_2^\ast \ca X'  \\
&p_{13}^\ast p_1^\ast \ca X' \ar[r]^{p_{13}^\ast \phi}&p_{13}^\ast p_2^\ast \ca X'&p_{23}^\ast p_2^\ast \ca X' \ar@{=}[l] \ar@{=}[r] & q_3^\ast \ca X'.
}
\end{split}
\end{align}
\end{small}
Consider the four maps 
\[
p_{123}, p_{124}, p_{134}, p_{234} \colon S^{''''} \to S^{'''},
\]
and note that
\begin{align*}
p_{123}^\ast \left( p_{23}^\ast \phi \circ  p_{12}^\ast \phi \right) &= p_{123}^\ast p_{23}^\ast \phi \circ p_{123}^\ast  p_{12}^\ast \phi  = \pi_{23}^\ast \phi \circ \pi_{12}^\ast \phi, \quad \text{and}\\
p_{123}^\ast p_{13}^\ast \phi &= \pi_{13}^\ast \phi,
\end{align*}
where 
\[
\pi_{12}, \pi_{13}, \pi_{14}, \pi_{23}, \pi_{24}, \pi_{34} \colon S^{''''} \to S^{''}
\]
are the canonical morphisms. For $i,j,k \in \set{1,2,3,4}$ with $i < j < k$, define 
\[
\psi_{ijk} \coloneqq p_{ijk}^\ast \psi. 
\]
For instance, pulling back $\psi$ along $p_{123}$ gives a $2$-morphism
\begin{align*}
\psi_{123} = p_{123}^\ast\psi \colon \pi_{23}^\ast \circ \pi_{12}^\ast \phi \Rightarrow \pi_{13}^\ast \phi.
\end{align*}
Similarly, we obtain $2$-morphisms
\begin{align*}
\psi_{124} &\colon \pi_{24}^\ast \phi \circ \pi_{12}^\ast \phi \Rightarrow \pi_{14}^\ast \phi, \\
\psi_{134} &\colon \pi_{34}^\ast \phi \circ \pi_{13}^\ast \phi \Rightarrow \pi_{14}^\ast \phi, \\
\psi_{234} &\colon \pi_{34}^\ast \phi \circ \pi_{23}^\ast \phi \Rightarrow \pi_{24}^\ast \phi.
\end{align*}
Moreover, observe that under $p_{123}$, diagram \eqref{diagram:stacks-1} pulls back to the diagram

\begin{align*} 
\xymatrixcolsep{3pc}	
\xymatrixrowsep{3pc}
\xymatrix{
r_1^\ast \ca X' \ar[r]^{\pi_{12}^\ast \phi}\ar@{=}[d] & r_2^\ast \ca X' \ar@{=}[r] \ar@{}[dr] | {\refsymbol \;\;\;\;\;\;\;\;\;\;\;\;\;\;\;\;\;\;} & r_2^\ast \ca X' \ar[d]^{\pi_{23}^\ast\phi}&   \\
r_1^\ast \ca X' \ar[r]^{\pi_{13}^\ast \phi}&r_3^\ast \ca X'&r_3^\ast \ca X',\ar@{=}[l]& 
}
\end{align*}
in which the $2$-morphism $\Rightarrow$ is the $2$-morphism $\psi_{123}$ defined above (and with $r_i$ is as in \eqref{equation:ri}). Using pull-backs by the other three $p_{ijk} \colon S^{''''} \to S^{'''}$, we thus obtain four triangles, that we may put together to form the following tetrahedron:
\begin{align} 
\label{diagram:tetahedron}
\begin{split}
\xymatrix{
&&r_1^\ast \ca X'\ar[dd] \ar[dll] \ar[drr]&&\\
r_2^\ast \ca X'\ar[rrrr] |(.50)\hole \ar[drr]&&&&r_4^\ast \ca X'\\
&&r_3^\ast \ca X'.\ar[urr]&&
}
\end{split}
\end{align}

\begin{definition}[cf. Section 1.1 in \cite{romagny}]\label{def:2com}
A \emph{diagram} in a $2$-category $\mr C$ consists of a collection of objects, a collection of \( 1 \)-morphisms between certain pairs of objects, and a collection of \( 2 \)-morphisms between certain pairs of \( 1 \)-morphisms with the same source and target, such that for $i = 1,2$, the sets of \( i \)-morphisms in the diagram include all possible compositions of \( i \)-morphisms. A pair of \( i \)-morphisms in the diagram with the same source and target is called an \emph{\( i \)-circuit}; such a circuit is said to \emph{commute} if the two $i$-morphisms are equal. A diagram in \( \mr{C} \) is called \emph{\( i \)-commutative} if all of its \( i \)-circuits commute.
\end{definition}



\begin{definition} \label{definition:2-descent}
Let $p \colon S' \to S$ be a faithfully flat locally finitely presented morphism of schemes. Let $\mathcal X'$ be a stack in groupoids over $S'$. 
A \emph{$2$-descent datum} for $\ca X'$ over $S'$ consists of:
\begin{enumerate}
\item an isomorphism of stacks 
\[
\phi \colon p_1^\ast \ca X' \to p_2^\ast \ca X'
\]
over $S^{''}$;
\item a $2$-isomorphism 
$$
\psi \colon p_{23}^\ast \phi \circ p_{12}^\ast \phi \Rightarrow p_{13}^\ast\phi
$$
as in diagram \eqref{diagram:stacks-1};
\end{enumerate}
such that the diagram \eqref{diagram:tetahedron} is $2$-commutative (see Definition \ref{def:2com}). In other words, the $2$-morphisms $\psi_{ijk}$ between the several compositions \eqref{diagram:tetahedron} are compatible, in the sense that the following diagram of $2$-morphisms commutes:
\begin{align} \label{compatible-2morphisms}
\xymatrixcolsep{7pc}
\xymatrixrowsep{3pc}
\xymatrix{
\pi_{34}^\ast \phi \circ \pi_{23}^\ast \phi \circ \pi_{12}^\ast \phi  \ar@{=>}[r]^-{(\pi_{34}^\ast \phi)_\ast(\psi_{123}) }\ar@{=>}[d]^-{(\pi_{12}^\ast \phi)^\ast(\psi_{234}) }  & p_{34}^\ast \phi \circ p_{13}^\ast \phi \ar@{=>}[d]^-{\psi_{134}} \\
p_{24}^\ast \phi \circ p_{12}^\ast \phi \ar@{=>}[r]^-{\psi_{124}}   & p_{14}^\ast \phi.
}
\end{align}
\end{definition}


This gives the following result. 
\begin{theorem}[Breen] \label{theorem:breen}
Let $(\phi, \psi)$ be a $2$-descent datum for the stack $\ca X'$ over $S'$. Then there exists a stack $\ca X$ over $S$, an isomorphism 
\[
\rho \colon \ca X \times_SS' \xrightarrow{\sim} \ca X'
\]
of stacks over $S'$, and a $2$-isomorphism $\chi \colon p_2^\ast \rho \circ \textnormal{can} \Rightarrow \phi \circ p_1^\ast \rho$ as in diagram
\begin{align} \label{diagram:stacks-3}
\begin{split}
\xymatrix{
p_1^\ast(\ca X \times_S S') \ar[r]^{\textnormal{can}} \ar@{}[dr] | {\refsymbol} \ar[d]^{p_1^\ast \rho} & p_2^\ast(\ca X \times_S S') \ar[d]^{p_2^\ast \rho}\\
p_1^\ast \ca X' \ar[r]^\phi & p_2^\ast \ca X',
}
\end{split}
\end{align}
such that the natural compatibility condition between $\chi$ and $\psi$ is satisfied.  
\end{theorem}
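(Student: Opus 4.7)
The plan is to deduce the theorem by unfolding Theorem \ref{thm:breen}. First, I would identify the $2$-descent datum of Definition \ref{definition:2-descent} with the descent datum attached to the singleton fppf covering $\{S' \to S\}$ in the big fppf site of $S$, in the sense of the Breen--Giraud theory of $2$-stacks. Recall that a $2$-fibred category $\mr{F}$ over a site is a $2$-stack precisely when, for every covering family $\{U_i \to U\}$, the canonical restriction $2$-functor $\mr{F}(U) \to \mr{F}(\{U_i \to U\})$ is a $2$-equivalence, where the target is the $2$-category whose objects are families of objects in $\mr{F}(U_i)$ equipped with gluing $1$-isomorphisms on double overlaps and coherence $2$-isomorphisms on triple overlaps, subject to a tetrahedral cocycle condition on quadruple overlaps. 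Applied to the covering $\{S' \to S\}$, an object of $\underline{Stack}(\{S' \to S\})$ is, by inspection of Definition \ref{definition:2-descent}, precisely a stack $\ca{X}'$ on $S'$ equipped with a $1$-isomorphism $\phi$ over $S''$ and a $2$-isomorphism $\psi$ over $S'''$ making diagram \eqref{compatible-2morphisms} commute over $S''''$.

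Second, I would invoke essential surjectivity of the restriction $2$-functor $\underline{Stack}(S) \to \underline{Stack}(\{S' \to S\})$, which is part of the content of Theorem \ref{thm:breen}. Given a $2$-descent datum $(\ca{X}', \phi, \psi)$, essential surjectivity produces a stack $\ca{X}$ over $S$ together with an equivalence in $\underline{Stack}(\{S' \to S\})$ between its canonical image $(\ca{X} \times_S S', \textnormal{can}, \textnormal{can})$ and the triple $(\ca{X}', \phi, \psi)$. By the very definition of a morphism in this $2$-category, such an equivalence is exactly the datum of a $1$-isomorphism of $S'$-stacks $\rho \colon \ca{X} \times_S S' \xrightarrow{\sim} \ca{X}'$ together with a $2$-isomorphism $\chi \colon p_2^\ast \rho \circ \textnormal{can} \Rightarrow \phi \circ p_1^\ast \rho$ as in diagram \eqref{diagram:stacks-3}, compatible with the pair $(\textnormal{can}, \psi)$ in the sense demanded by the theorem. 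This yields the required conclusion.

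The only substantive verification beyond citing Theorem \ref{thm:breen} is therefore bookkeeping: I would need to check that the conventions for the projections $p_i$, $p_{ij}$, $p_{ijk}$, $p_{ijkl}$ and the induced tetrahedral $2$-cocycle \eqref{compatible-2morphisms} match those employed in \cite{giraud} and \cite{breen}, so that "$2$-descent datum" in Definition \ref{definition:2-descent} is literally the notion of descent datum for the $2$-stack $\underline{Stack}_S$ for which Theorem \ref{thm:breen} asserts effectivity. This is a routine match of orderings and sign conventions, and it is the only step where care is required; no input specific to algebraic stacks enters at this stage, which is consistent with the fact that the theorem is stated for arbitrary stacks in groupoids. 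The algebraic-stack and Deligne--Mumford refinements in Theorem \ref{theorem:main:intro} will then be obtained separately, using the locality (for the fppf, resp.\ \'etale topology) of the corresponding properties.
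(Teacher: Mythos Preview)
Your proposal is correct and matches the paper's approach: the paper's own proof is a one-line citation to \cite[Example~1.11.(i)]{breen}, i.e.\ exactly the statement that $\underline{Stack}_S$ is a $2$-stack (Theorem~\ref{thm:breen}), and you have simply unfolded what essential surjectivity of the restriction $2$-functor $\underline{Stack}(S) \to \underline{Stack}(\{S' \to S\})$ means in the language of Definition~\ref{definition:2-descent}. The bookkeeping caveat you flag is the only real content, and it is indeed routine.
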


\begin{proof}
This follows from \cite[Example 1.11.(i)]{breen}.
\end{proof}

To prove Theorem \ref{theorem:main:intro}, we use the fact that any stack over a scheme $S$ which is smooth locally on $S$ an algebraic stack, is actually an algebraic stack. More precisely, we have the following lemma, which is likely well-known but which we include for convenience of the reader. 
 

\begin{lemma} \label{lemma:local}Let $S' \to S$ be a morphism of schemes which faithfully flat and locally of finite presentation. Let $\ca X$ be a stack in groupoids over $S$ and define $\ca X' = \ca X \times_S S'$. Then the following holds. 
\begin{enumerate}
\item \label{lemma:stacklem-0} 
The diagonal  
$
\Delta \colon \ca X \to \ca X \times_S \ca X
$
is representable by algebraic spaces if and only if the diagonal $
\Delta' \colon \ca X' \to \ca X' \times_{S'} \ca X'
$ is representable by algebraic spaces. If this is true, then $\Delta$ is separated and quasi-compact if and only if $\Delta'$ is separated and quasi-compact.  
\item   \label{lemma:stacklem-1} 
The stack $\ca X'$ is an algebraic stack over $S'$ if and only if $\ca X$ is an algebraic stack over $S$. 
\item \label{lemma:stacklem-2} 
The stack $\ca X'$ is a Deligne--Mumford stack over $S'$ if and only if $\ca X$ is a Deligne--Mumford stack over $S$. 
\end{enumerate}
\end{lemma}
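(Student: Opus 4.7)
The strategy is to handle the three statements in sequence, using (1) to feed into (2) and (2) into (3); in each case, the direction from $\ca X$ to $\ca X'$ is standard base change, so all the content lies in the descent direction from $\ca X'$ back to $\ca X$. For part (1), my key observation will be that for any scheme $T$ over $S$ and any morphism $T \to \ca X \times_S \ca X$, setting $T' = T \times_S S'$ there is a canonical isomorphism
\[
\left( T \times_{\ca X \times_S \ca X} \ca X \right) \times_T T' \;\simeq\; T' \times_{\ca X' \times_{S'} \ca X'} \ca X'.
\]
Hence the stack on the left is an algebraic space if and only if its fppf base change along $T' \to T$ is; since algebraic spaces satisfy fppf descent (see \cite[Tag 0ADV]{stacks-project}), this yields the equivalence. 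Separatedness and quasi-compactness of morphisms of algebraic spaces are themselves fppf-local on the base, so the last sentence of (1) follows by the same argument.

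For part (2), the forward implication is immediate from stability of algebraicity under base change. For the converse, assume $\ca X'$ is algebraic: by (1) the diagonal of $\ca X$ is already representable by algebraic spaces, so only the existence of a presentation remains. I take a smooth surjection $V \to \ca X'$ with $V$ a scheme and compose with the natural morphism $\ca X' = \ca X \times_S S' \to \ca X$, which is the base change of $p \colon S' \to S$ along $\ca X \to S$ and is therefore representable by schemes, faithfully flat, and locally of finite presentation. The composite $V \to \ca X$ is then surjective, flat, locally of finite presentation, and representable by algebraic spaces. The main obstacle is that this provides only an fppf surjection from a scheme, not a smooth one, so I plan to close the gap by invoking the Stacks project criterion that a stack whose diagonal is representable by algebraic spaces and which admits a representable, surjective, flat, locally finitely presented morphism from a scheme is already an algebraic stack (equivalently, one can refine $V$ locally using the structure of flat finitely presented morphisms to extract a smooth atlas). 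This is the step I expect to require the most careful reference-hunting.

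For part (3), I would reduce to (2) by using the characterisation that an algebraic stack is Deligne--Mumford if and only if its diagonal is unramified. Unramifiedness of a morphism of algebraic spaces is fppf-local on the base, hence $\Delta$ is unramified if and only if $\Delta'$ is, and combined with the algebraicity equivalence from (2) this gives the Deligne--Mumford equivalence. The only subtlety here is confirming that unramifiedness is preserved and reflected under the fppf base change $T' \to T$ used in (1), but this is standard.
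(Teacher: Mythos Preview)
Your proposal is correct and follows the same overall architecture as the paper (base change is trivial; the content is in descent; part (2) is proved by composing a smooth atlas of $\ca X'$ with the fppf morphism $\ca X' \to \ca X$ and invoking the bootstrap/Artin criterion \cite[\href{https://stacks.math.columbia.edu/tag/06DC}{Tag 06DC}]{stacks-project}). The differences are local to parts (1) and (3).

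For (1), the paper argues more concretely: it fixes schemes $U,V$ with maps to $\ca X$, forms the cartesian cube relating $U \times_{\ca X} V$ to $U' \times_{\ca X'} V'$, and then invokes \cite[Lemme~4.3.3]{LM-B} to descend representability along the fppf epimorphism $U' \to U$. Your route---identifying $(T \times_{\ca X \times_S \ca X} \ca X) \times_T T'$ with the corresponding fibre product over $S'$ and then citing fppf descent for algebraic spaces---is the same idea packaged more efficiently; it is worth noting explicitly that $T \times_{\ca X \times_S \ca X} \ca X$ is already a \emph{sheaf} of sets (the Isom sheaf), so the relevant descent statement is really \cite[\href{https://stacks.math.columbia.edu/tag/04SK}{Tag 04SK}]{stacks-project} rather than general stacky descent. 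For (3), the paper threads through the chain of equivalences ``DM over $S'$ $\Leftrightarrow$ DM over $\Spec(\ZZ)$ $\Leftrightarrow$ DM in the sense of \cite[\href{https://stacks.math.columbia.edu/tag/050D}{Tag 050D}]{stacks-project}'' and then uses that the DM property descends along fppf morphisms of base stacks \cite[\href{https://stacks.math.columbia.edu/tag/06TZ}{Tag 06TZ}]{stacks-project}; your argument via unramifiedness of the diagonal (which is indeed fppf-local on the target) is shorter and equally valid. Either set of references is acceptable.
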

\begin{proof}
Let us first prove item \eqref{lemma:stacklem-0}. 
Since $\Delta'$ is the base change of $\Delta$ along $S' \to S$, we may assume that $\Delta'$ is representable, separated and quasi-compact, and it suffices under these conditions to prove that $\Delta$ is representable (indeed, being separated and quasi-compact is fppf local on the base, see \cite[\href{https://stacks.math.columbia.edu/tag/02YJ}{Tag 02YJ}]{stacks-project}). 
For this, it suffices to consider to schemes $U$ and $V$, equipped with morphisms $U \to \ca X$ and $V \to \ca X$, and prove that the fibre product $U \times_{\ca X} V$ is representable by an algebraic space, see \cite[Corollary 3.13]{LM-B}. Define $U' = \ca X' \times_{\ca X} U$ and $V'= \ca X' \times_{\ca X} V$. We obtain the following cartesian diagram:
{ \small
\[
\xymatrixrowsep{0.8pc}
\xymatrixcolsep{1pc}
\xymatrix{
& U' \times_{\ca X'} V'\ar[dd]  \ar[dl] \ar[drr]&& \\
V'\ar[dd] \ar[drr]&&& U \times_{\ca X} V \ar[dl] \ar[dd] \\
&U' \ar[dl] \ar[drr]& V\ar[dd] & \\
\ca X' \ar[drr] && &U. \ar[dl] \\
&& \ca X & 
}
\]
}
The morphism $\ca X' \to \ca X$ of stacks over $S$ is representable as it is the base change of the representable morphism $S' \to S$, hence $U'$ and $V'$ are representable by algebraic spaces. Since $\ca X'$ is an algebraic stack, 
the morphism $V' \to \ca X'$ is representable by algebraic spaces, which implies that its base change 
$
U' \times_{\ca X'} V' \to U'
$
is representable by algebraic spaces. Finally, the morphism of algebraic spaces $U' \to U$ is an fppf covering, hence an epimorphism. Using \cite[Lemme 4.3.3]{LM-B}, we conclude that  $U \times_{\ca X} V \to U$ is representable. As $U$ is scheme, $U \times_{\ca X} V$ is an algebraic space, proving item \eqref{lemma:stacklem-0}. 

To prove item \eqref{lemma:stacklem-1}, assume first that $\ca X$ is an algebraic stack over $S$. Then $\ca X'$ is an algebraic stack over $S$ by 
\cite[\href{https://stacks.math.columbia.edu/tag/04T2}{Tag 04T2}]{stacks-project}. In particular, $\ca X'$ is an algebraic stack over $S'$ by 
\cite[\href{https://stacks.math.columbia.edu/tag/04X5}{Tag 04X5}]{stacks-project}. Conversely, assume that $\ca X'$ is an algebraic stack over $S'$, and let $U'$ be an algebraic space equipped with a surjective and smooth morphism $U' \to \ca X'$. Then $\ca X'$ is an algebraic stack over $S$ by 
\cite[\href{https://stacks.math.columbia.edu/tag/04X5}{Tag 04X5}]{stacks-project}, and the morphisms $U' \to \ca X'$ and $\ca X' \to \ca X$ are representable by algebraic spaces. Therefore, the composition $U' \to \ca X' \to \ca X$ is representable by algebraic spaces, flat, surjective and locally of finite presentation. By Artin's theorem on representability of flat groupoids, see 
\cite[\href{https://stacks.math.columbia.edu/tag/06DC}{Tag 06DC}]{stacks-project}, it follows that $\ca X$ is an algebraic stack over $S$. 

Finally, we prove item \eqref{lemma:stacklem-2}. By item \eqref{lemma:stacklem-1} we may assume $\ca X$ (resp.\ $\ca X'$) is an algebraic stack over $S$ (resp.\ $S'$). 
As the morphism $\ca X' \to \ca X$ is representable by algebraic spaces, if $\ca X$ is a Deligne--Mumford stack over $S$ then $\ca X'$ is a Deligne--Mumford stack over $S'$.
Conversely, assume that $\ca X'$ is a Deligne--Mumford stack over $S'$. Notice that $\ca X'$ is an algebraic stack over $\Spec(\ZZ)$ by 
\cite[\href{https://stacks.math.columbia.edu/tag/04X5}{Tag 04X5}]{stacks-project}. Since $\ca X'$ is a Deligne--Mumford stack over $S'$, 
$\ca X'$ is a Deligne--Mumford stack, that is, a Deligne--Mumford stack over $\Spec(\ZZ)$. Hence, by 
\cite[\href{https://stacks.math.columbia.edu/tag/06MB}{Tag 06MB}]{stacks-project}, $\ca X'$ is DM in the sense of 
\cite[\href{https://stacks.math.columbia.edu/tag/050D}{Tag 050D}]{stacks-project}, which implies that $\ca X'$ is DM over $S'$ by 
\cite[\href{https://stacks.math.columbia.edu/tag/050N}{Tag 050N}]{stacks-project}. Thus, by
\cite[\href{https://stacks.math.columbia.edu/tag/06TZ}{Tag 06TZ}]{stacks-project}, the algebraic stack $\ca X$ is DM over $S$. Consequently, $\ca X$ is DM by 
\cite[\href{https://stacks.math.columbia.edu/tag/050N}{Tag 050N}]{stacks-project}, hence Deligne--Mumford by 
\cite[\href{https://stacks.math.columbia.edu/tag/06N3}{Tag 06N3}]{stacks-project}, hence Deligne--Mumford over $S$.
\end{proof}



\begin{proof}[Proof of Theorem \ref{theorem:main:intro}]
Theorem \ref{theorem:breen} yields the stack $\ca X$ over $S$ together with the $1$-isomorphism $\rho \colon \ca X \times_S S' \xrightarrow{\sim} \ca X'$ and the $2$-isomorphism $\chi \colon p_2^\ast \rho \circ \textnormal{can} \Rightarrow \phi \circ p_1^\ast \rho$ that have the right compatibility properties with respect to $\psi$. 
The remaining assertions follow from Lemma \ref{lemma:local}. 
\end{proof}


\begin{corollary} \label{cor:corollary}
Let $S' \to S$ be a morphism of schemes, faithfully flat and locally of finite presentation, and let $X'$ be a scheme over $S'$ equipped with a descent datum $\phi$ as in Section \ref{section:descendingschemes}. Then there exists an algebraic space $X$ over $S$ and an $S$-morphism of algebraic spaces $\pi \colon X' \to X$ such that the induced map $X' \to X \times_{S}S'$ is an isomorphism of algebraic spaces with descent data. 
\end{corollary}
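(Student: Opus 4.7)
The plan is to deduce the corollary from Theorem~\ref{theorem:main:intro} by upgrading the scheme-level descent datum to a $2$-descent datum, and then verifying that the resulting algebraic stack is in fact an algebraic space.

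First, I would view the scheme $X'$ as a stack in groupoids over $S'$ (indeed, as a Deligne--Mumford stack). The scheme-level descent datum $\phi$ is in particular an isomorphism of $S''$-schemes, hence a $1$-isomorphism of stacks $p_1^\ast X' \xrightarrow{\sim} p_2^\ast X'$ over $S''$. Because $\phi$ satisfies the strict cocycle identity $p_{23}^\ast \phi \circ p_{12}^\ast \phi = p_{13}^\ast \phi$ as morphisms of schemes, we may take $\psi = \id$ as the $2$-isomorphism in Definition~\ref{definition:2-descent}, and the compatibility diagram \eqref{compatible-2morphisms} is then trivially commutative. Thus $(\phi, \id)$ is a $2$-descent datum for $X'$ as a stack over $S'$.

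Next I would apply Theorem~\ref{theorem:main:intro} to obtain an algebraic stack $\ca X$ over $S$ (algebraic, since $X'$ is), together with an isomorphism $\rho \colon \ca X \times_S S' \xrightarrow{\sim} X'$ of stacks over $S'$ compatible with the descent data. The required $S$-morphism is then defined as the composition $\pi \colon X' \xrightarrow{\rho^{-1}} \ca X \times_S S' \to \ca X$, and the induced map $X' \to \ca X \times_S S'$ is an isomorphism of stacks with descent data by construction.

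The main remaining point--and the part I expect to require the most care--is to show that $\ca X$ is actually an algebraic space, and not merely an algebraic stack. The strategy is to check that the diagonal $\Delta_{\ca X} \colon \ca X \to \ca X \times_S \ca X$ is a monomorphism, which for an algebraic stack is equivalent to being an algebraic space (see \cite[\href{https://stacks.math.columbia.edu/tag/04SZ}{Tag 04SZ}]{stacks-project}). The base change of $\Delta_{\ca X}$ along $S' \to S$ is canonically identified, via $\rho$, with the diagonal $\Delta_{X'} \colon X' \to X' \times_{S'} X'$, which is a monomorphism because $X'$ is a scheme. Since the property of being a monomorphism is fppf-local on the target (\cite[\href{https://stacks.math.columbia.edu/tag/02YJ}{Tag 02YJ}]{stacks-project} and \cite[\href{https://stacks.math.columbia.edu/tag/04ZM}{Tag 04ZM}]{stacks-project}), the diagonal $\Delta_{\ca X}$ is itself a monomorphism, so $\ca X =: X$ is an algebraic space over $S$. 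This completes the proof.
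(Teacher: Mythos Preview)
Your proof is correct and follows essentially the same approach as the paper: apply Theorem~\ref{theorem:main:intro} to obtain an algebraic (indeed Deligne--Mumford) stack $\ca X$, and then deduce that $\ca X$ is an algebraic space by checking that its diagonal is a monomorphism, a property which descends fppf from the diagonal of $X'$. You are simply more explicit than the paper about the preliminary step of promoting the scheme-level descent datum to a $2$-descent datum with $\psi = \id$.
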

\begin{proof}
Theorem \ref{theorem:main:intro} implies the existence of $X$ as a Deligne--Mumford stack, hence we only need to prove that $X$ is an algebraic space. For this, in view of  \cite[Proposition 2.4.1.1]{LM-B}, it suffices to show that the diagonal $\Delta_{X/S} \colon X \to X \times_S X$ is a monomorphism. This is fppf local on $S$ \cite[\href{https://stacks.math.columbia.edu/tag/02YK}{Tag 02YK}]{stacks-project}, thus follows from the fact that $X' \to X' \times_{S'}X'$ is a monomorphism. 
\end{proof}

\subsection{Galois $2$-descent}

Recall the notion of Galois $2$-descent datum, see Definition \ref{def:Galois2descent}. 

\begin{lemma} \label{lemma:set-can-bij}
Let $S' \to S$ be a surjective finite étale morphism of schemes which is a Galois covering with Galois group $\Gamma$, and let $\ca X'$ be a stack over $S'$. Then the following sets are in canonical bijection:
\begin{enumerate}
\item \label{set1}The set of $2$-descent data 
for $\ca X'$ over $S'$. 
\item \label{set2}The set of Galois $2$-descent data 
for $\ca X'$ over $S'$. 
\end{enumerate}
\end{lemma}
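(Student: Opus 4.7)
The plan is to exploit the canonical decomposition of the fibre powers $S^{(n)} := S' \times_S \cdots \times_S S'$ that is available when $S' \to S$ is a Galois covering with group $\Gamma$, and then translate a $2$-descent datum $(\phi, \psi)$ component by component into the data $(f_\sigma, \psi_{\tau, \sigma})$ of a Galois $2$-descent datum, and conversely.

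First, I would record the canonical isomorphism $S'' \cong \coprod_{\sigma \in \Gamma} S'_\sigma$, where $S'_\sigma$ is the graph of $\sigma \colon S' \to S'$, parametrizing pairs $(x, \sigma x)$. On the component $S'_\sigma$ the projection $p_1$ restricts to the identity while $p_2$ restricts to $\sigma$; hence $p_1^\ast \ca X'|_{S'_\sigma}$ is canonically $\ca X'$ and $p_2^\ast \ca X'|_{S'_\sigma}$ is $\sigma^\ast \ca X' = {^\sigma \ca X'}$. Therefore an $S''$-isomorphism $\phi$ is equivalent to a family of $S'$-isomorphisms $\phi_\sigma \colon \ca X' \xrightarrow{\sim} {^\sigma \ca X'}$, and setting $f_\sigma := \phi_\sigma^{-1}$ produces precisely the $1$-morphisms of Definition~\ref{def:Galois2descent}.

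Next, I would describe the analogous decomposition $S''' \cong \coprod_{(\sigma, \tau) \in \Gamma \times \Gamma} S'_{\sigma, \tau}$, where $S'_{\sigma, \tau}$ parametrizes triples $(x, \sigma x, \tau \sigma x)$, together with the restrictions of the three projections: $p_{12}$ and $p_{13}$ restrict to the identity onto $S'_\sigma$ and $S'_{\tau\sigma}$ respectively, and $p_{23}$ restricts to $\sigma$ onto $S'_\tau$. Using the identification ${^\sigma({^\tau \ca X'})} = {^{\tau \sigma} \ca X'}$, I would verify that on the $(\sigma, \tau)$-component the $2$-morphism $\psi \colon p_{23}^\ast \phi \circ p_{12}^\ast \phi \Rightarrow p_{13}^\ast \phi$ becomes a $2$-morphism ${^\sigma \phi_\tau} \circ \phi_\sigma \Rightarrow \phi_{\tau \sigma}$, which after inversion yields exactly a $2$-isomorphism $\psi_{\tau, \sigma} \colon f_\sigma \circ {^\sigma f_\tau} \Rightarrow f_{\tau \sigma}$. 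This defines the assignment $(\phi, \psi) \mapsto (f_\sigma, \psi_{\tau, \sigma})$; its inverse, assembling the components into global data over $S''$ and $S'''$, is immediate.

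Finally, I would check the equivalence of cocycle conditions. The decomposition $S'''' \cong \coprod_{(\sigma, \tau, \gamma) \in \Gamma^3} S'_{\sigma, \tau, \gamma}$, parametrizing $4$-tuples $(x, \sigma x, \tau \sigma x, \gamma \tau \sigma x)$, together with a similar analysis of the four maps $p_{ijk} \colon S'''' \to S'''$ and the $r_i \colon S'''' \to S'$ in~\eqref{equation:ri}, will show that the commutativity of diagram~\eqref{compatible-2morphisms} restricted to the $(\sigma, \tau, \gamma)$-component is precisely the commutativity of diagram~\eqref{galoisdiagram2morphisms}. The main obstacle I anticipate is the bookkeeping: keeping track of whether products in $\Gamma$ are ordered as $\sigma\tau$ or $\tau\sigma$ under each identification and choosing consistent conventions so that the reversal of indices (from the $\psi_{\sigma, \tau}$ obtained on the $(\sigma, \tau)$-component to the $\psi_{\tau, \sigma}$ of Definition~\ref{def:Galois2descent}) is respected, and so that the tetrahedral compatibility is not translated into a twisted variant of~\eqref{galoisdiagram2morphisms}.
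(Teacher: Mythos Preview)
Your approach is essentially the same as the paper's: both exploit the decomposition of the fibre powers $S'', S''', S''''$ into components indexed by $\Gamma$, $\Gamma^2$, $\Gamma^3$ respectively, and then translate the data $(\phi,\psi)$ componentwise. The only difference is cosmetic: the paper parametrizes $S''$ via $(\sigma, x) \mapsto (\sigma x, x)$ rather than your $(x, \sigma x)$, so that $p_1$ restricts to $\sigma$ and $p_2$ to the identity; with this choice $\phi$ restricts directly to $f_\sigma \colon {^\sigma \ca X'} \to \ca X'$ and $\psi$ restricts directly to $\psi_{\tau,\sigma} \colon f_\sigma \circ {^\sigma f_\tau} \Rightarrow f_{\tau\sigma}$, eliminating the inversion step you need and the attendant bookkeeping worry you flag at the end.
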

\begin{proof}
See \cite[Section 6.2, Example B]{BLR} and \cite[Proposition 4.4.4]{poonen} for the proof in the case of schemes. 
In the stacky case, one proceeds as follows.

As $S' \to S$ is a Galois covering with Galois group $\Gamma$, the maps 
\begin{align}
\label{galoisiso-first}
\Gamma \times S' \to S'', \quad \quad &(\sigma, x) \mapsto (\sigma x, x), \\
\label{galoisiso-second}
\Gamma \times \Gamma \times S' \to S''', \quad \quad &(\tau, \sigma, x) \mapsto ( (\tau \sigma)x, \sigma x, x), \quad \quad \text{and} \\
\label{galoisiso-third}
\Gamma \times \Gamma \times \Gamma \times S' \to S'''', \quad \quad &(\gamma, \tau, \sigma, x) \mapsto ((\gamma \tau \sigma)x, (\tau \sigma)x, \sigma x, x),
\end{align}
are isomorphisms. Furthermore, we have canonical isomorphisms 
\begin{align}
\label{line:first}
p_1^\ast \ca X' & = \coprod_{\sigma \in \Gamma}{^\sigma \ca X'}, \quad 
p_2^\ast \ca X' = \coprod_{\sigma \in \Gamma} {\ca X'}, \\
\label{line:second}
q_1^\ast \ca X' &  
= \coprod_{(\tau, \sigma) \in \Gamma \times \Gamma} {^{\tau\sigma} \ca X'}, \quad q_2^\ast = \coprod_{(\tau, \sigma) \in \Gamma \times \Gamma} {^\sigma \ca X'}, \quad q_3^\ast \ca X' = \coprod_{(\tau, \sigma) \in \Gamma \times \Gamma} \ca X'.
\end{align}
The isomorphisms in \eqref{line:first} are isomorphisms of $S''$-stacks, where $\coprod_{\sigma \in \Gamma} {\ca X'}$ and $\coprod_{\sigma \in \Gamma} {^\sigma \ca X'}$ as stacks over $S''$ via their canonical morphisms to $\coprod_{\sigma \in \Gamma} S'$ and the isomorphism $S'' = \coprod_{\sigma \in \Gamma} S'$ of \eqref{galoisiso-first}. Similarly, we view $\coprod_{(\tau, \sigma) \in \Gamma \times \Gamma} \ca X'$, $\coprod_{(\tau, \sigma) \in \Gamma \times \Gamma} {^\sigma \ca X'}$ and $\coprod_{(\tau, \sigma) \in \Gamma \times \Gamma} {^{\tau \sigma} \ca X'}$ as stacks over $S'''$ via their canonical morphisms to $\coprod_{(\tau, \sigma) \Gamma \times \Gamma} S'$ and the identification $S''' = \coprod_{(\tau, \sigma) \in \Gamma \times \Gamma} S'$ of \eqref{galoisiso-second}; then the isomorphisms in \eqref{line:second} are isomorphisms stacks over $S'''$. We conclude:
\begin{enumerate} 
\item[(i)] \label{item:firstgiving} Giving an isomorphism 
$
\phi \colon p_1^\ast \ca X' \to p_2^\ast \ca X'
$
of stacks over $S''$ is equivalent to giving for each $\sigma \in \Gamma$ an isomorphism 
$
f_\sigma \colon {^\sigma \ca X'} \to \ca X'
$
of stacks over $S'$. 
\item[(ii)] 
\label{item:secondgiving}
Let $
\phi \colon p_1^\ast \ca X' \to p_2^\ast \ca X'
$ be an isomorphism corresponding to isomorphisms $
f_\sigma \colon {^\sigma \ca X'} \to \ca X'
$ $(\sigma \in \Gamma)$ as in (i). Then the isomorphisms $$p_{12}^\ast \phi \colon q_1^\ast \ca X' \to q_2^\ast \ca X', \quad p_{23}^\ast \phi \colon q_2^\ast \ca X' \to q_3^\ast \ca X', \quad p_{13}^\ast \phi \colon q_1^\ast \ca X' \to q_3^\ast \ca X'$$ induced by $\phi$
 correspond to the isomorphisms $${^\sigma f_\tau} \colon {^{\tau \sigma} \ca X'} \to {^\sigma \ca X'}, \quad f_\sigma \colon {^\sigma \ca X'} \to \ca X', \quad f_{\tau \sigma} \colon {^{\tau \sigma} \ca X'} \to \ca X' \quad ((\tau, \sigma) \in \Gamma \times \Gamma)$$ induced by the $f_\sigma$ $(\sigma \in \Gamma)$. In particular, giving a $2$-isomorphism $\psi \colon p_{23}^\ast \phi \circ p_{12}^\ast \phi \Rightarrow p_{13}^\ast\phi$ as in diagram \eqref{diagram:stacks-1} is equivalent to giving for each $(\tau, \sigma) \in \Gamma \times \Gamma$ a $2$-isomorphism $f_\sigma \circ {^\sigma f_\tau} \Rightarrow f_{\tau \sigma}$. 
\item[(iii)] Let $
\phi \colon p_1^\ast \ca X' \to p_2^\ast \ca X'
$ be an isomorphism corresponding to isomorphisms $
f_\sigma \colon {^\sigma \ca X'} \to \ca X'
$ $(\sigma \in \Gamma)$ as in (i), and let $\psi \colon p_{23}^\ast \phi \circ p_{12}^\ast \phi \Rightarrow p_{13}^\ast\phi$ be a $2$-isomorphism corresponding to $2$-isomorphisms $f_\sigma \circ {^\sigma f_\tau} \Rightarrow f_{\tau \sigma}$ $\left((\tau, \sigma) \in \Gamma \times \Gamma \right)$ as in (ii). Then the diagram \eqref{compatible-2morphisms} commutes if and only if the diagram \eqref{galoisdiagram2morphisms} commutes. 
\end{enumerate}
This provides a canonical bijection between the sets \eqref{set1} and \eqref{set2}. 
\end{proof}

\begin{proof}[Proof of Theorem \ref{theorem:galois}]
See Theorem \ref{theorem:main:intro} and Lemma \ref{lemma:set-can-bij}. 
\end{proof}

\subsection{Group actions and Galois $2$-descent}
Let $S$ be a scheme and $G$ a group scheme over $S$, with multiplication map $m \colon G \times_S G \to G$ and unit section $e \colon S \to G$. Let $\ca X$ be a stack over $S$. 
We define a \emph{group action of $G$ on $\ca X$ over $S$} 
as an action of the functor in groups over $S$ associated to $G$ on the stack $\ca X$ over $S$, 
in the sense of \cite[Definition 1.3]{romagny} (see also \cite[Definition 6.1]{breen-bitorseurs}, \cite[Section 2.1]{laszlo}, and \cite[Definition 5.1]{noohi-groupactions} for slightly differently formulated but equivalent notions). For convenience of the reader, we recall that such a group action 
consists of a triple $(\mu, \alpha, \beta)$, where $\mu$ is a morphism of stacks $\mu \colon G \times_S \ca X \to \ca X$ over $S$, and $\alpha$ and $\beta$ are $2$-morphisms $\alpha \colon \mu \circ ( \id_{G} \times \mu) \Rightarrow \mu \circ (m \times \id_{\ca X})$ and $\beta \colon \mu \circ (e, \id_{\ca X}) \Rightarrow \id_{\ca X}$, as in the following diagrams:
\begin{align}
\label{GGX}
\begin{split}
\xymatrixrowsep{3pc}
\xymatrixcolsep{3pc}
\xymatrix{
G \times_S G \times_S \ca X \ar[r]^-{m \times \id_{\ca X}} \ar[d]_-{\id_{G} \times \mu} & G \times_S \ca X \ar[d]^-\mu \\
G \times_S \ca X \ar[r]_-\mu
\ar@{}[ur] | {\rotatebox{180}{\refsymbol} \;\;\; \alpha}
 & \ca X,
} 
\xymatrixrowsep{3pc}
\xymatrixcolsep{3pc}
\xymatrix{
&G \times_S \ca X 
\ar@{}[d] | {\rotatebox{45}{\refsymbol}\beta}
 \ar[dr]^-{\mu} &  \\
\ca X\ar[ur]^-{(e, \id_{\ca X})} \ar[rr]_-{\id_{\ca X}} && \ca X,
}
\end{split}
\end{align}
that satisfy the following compatibility condition. For each $S$-scheme $T$, each $x,y \in \ca X(T)$ and each $g,h \in G(T)$, $\alpha$ and $\beta$ are given by natural isomorphisms  
\[
\alpha^x_{g,h} \colon \mu(g, \mu(h,x)) \xrightarrow{\sim} \mu(gh,x) \quad \text{ and } \quad \beta^x \colon \mu(e,x) \xrightarrow{\sim} x,
\]
and one requires that for all $T \in (\Sch/S), x \in \ca X(T)$ and $g,h,k \in G(T)$, we have
\begin{align}
\label{align:first}
\begin{split}
\alpha_{g,hk}^x \circ \mu\left(g, \alpha_{h,k}^x\right) = \alpha_{gh,k}^x \circ \alpha^{\mu(k,x)}_{g,h}, \quad 
\mu(g, \beta^x) = \alpha_{g,e}^x, \quad \beta^{\mu(h,x)} = \alpha_{e,h}^x.
\end{split} 
\end{align}
\begin{example} \label{example:stackyness}
Let $\ca X$ be a stack over a scheme $S$, and let $\Gamma$ be a finite group. Then an action of $\Gamma$ on $\ca X$ over $S$ is given by an isomorphism $\mu(\sigma) \colon \ca X \to \ca X$ for each $\sigma \in \Gamma$, a $2$-isomorphism $\alpha_{\tau, \sigma} \colon \mu(\tau) \circ \mu(\sigma) \Rightarrow \mu(\tau\sigma)$ for each $\tau, \sigma \in \Gamma$, and a $2$-isomorphism $\beta \colon \mu(e) \Rightarrow \id_{\ca X}$, such that the composition $\mu(\gamma) \circ \mu(\tau) \circ \mu(\sigma) \Rightarrow \mu(\gamma \tau) \circ \mu(\sigma) \Rightarrow \mu(\gamma \tau \sigma)$ coincides with the composition $\mu(\gamma) \circ \mu(\tau) \circ \mu(\sigma) \Rightarrow \mu(\gamma) \circ \mu(\tau \sigma) \Rightarrow \mu(\gamma \tau \sigma)$ for $\gamma, \tau, \sigma \in \Gamma$, such that $\mu(\sigma)_\ast(\beta) = \alpha_{\sigma,e} \colon \mu(\sigma) \circ \mu(e) \Rightarrow \mu(\sigma)$ for $\sigma \in \Gamma$, and such that $\mu(\sigma)^\ast(\beta) = \alpha_{e, \sigma} \colon \mu(e) \circ \mu(\sigma) \Rightarrow \mu(\sigma)$ for $\sigma \in \Gamma$.  
\end{example}

\begin{lemma} \label{lemma:action-and-descent}
Let $S' \to S$ be a surjective finite étale morphism which is a Galois cover with Galois group $\Gamma$, and let $\ca X'$ be a stack over $S'$. 
Then a group action of $\Gamma$ on the stack $\ca X'$ over $S$, such that for each $\sigma \in \Gamma$ the diagram
\begin{align} \label{diagram:XsigmaScommutes}
\begin{split}
\xymatrix{
\ca X' \ar[d]  \ar[r]^-\sigma & \ca X' \ar[d] \\
S' \ar[r]^-\sigma & S'
}
\end{split}
\end{align}
commutes, determines in a canonical way a Galois $2$-descent datum for $\ca X'$. 
\end{lemma}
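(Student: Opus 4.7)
The plan is to translate the action data $(\mu,\alpha,\beta)$ piece by piece into a Galois $2$-descent datum, deducing the cocycle identity \eqref{galoisdiagram2morphisms} from the associativity axiom \eqref{align:first} for the action. Unpacking the $\Gamma$-action via Example \ref{example:stackyness}, I have for each $\sigma\in\Gamma$ an $S$-automorphism $\mu(\sigma)\colon \ca X'\to \ca X'$, for each pair $(\tau,\sigma)$ a $2$-isomorphism $\alpha_{\tau,\sigma}\colon \mu(\tau)\circ\mu(\sigma)\Rightarrow \mu(\tau\sigma)$, and a unit $\beta\colon \mu(e)\Rightarrow\id_{\ca X'}$, all subject to the unit and associativity axioms. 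The hypothesis \eqref{diagram:XsigmaScommutes} says exactly that each $\mu(\sigma)$ is a lift of $\sigma\colon S'\to S'$.

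For the $1$-morphisms $f_\sigma$, I would use the universal property of ${^\sigma \ca X'}=\ca X'\times_{S',\sigma} S'$: because $\mu(\sigma)$ covers $\sigma$, the pair $(\mu(\sigma),\id_{S'})$ defines a canonical $S'$-isomorphism $\ca X'\xrightarrow{\sim}{^\sigma \ca X'}$, and I let $f_\sigma\colon{^\sigma \ca X'}\xrightarrow{\sim}\ca X'$ be its inverse. Under the canonical identification ${^\sigma}({^\tau \ca X'})\cong {^{\tau\sigma}\ca X'}$ coming from associativity of fibre products, a direct unwinding shows that the composite $f_\sigma\circ{^\sigma f_\tau}$ is the inverse of the $S'$-iso $\ca X'\to{^{\tau\sigma}\ca X'}$ obtained from the same construction applied to $\mu(\tau)\circ\mu(\sigma)$. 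The action $2$-cell $\alpha_{\tau,\sigma}$ then transports through this correspondence to yield the desired $2$-isomorphism $\psi_{\tau,\sigma}\colon f_\sigma\circ{^\sigma f_\tau}\Rightarrow f_{\tau\sigma}$.

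Finally, I would verify that for every triple $(\gamma,\tau,\sigma)\in \Gamma^3$ the diagram \eqref{galoisdiagram2morphisms} commutes. Via the correspondence of the preceding paragraph, together with the identification of the whiskering ${^\sigma}\psi_{\gamma,\tau}$ with the horizontal composition of $\alpha_{\gamma,\tau}$ and $\mu(\sigma)$, both routes around the square translate into the two sides of the associativity axiom \eqref{align:first} applied to the triple $(\gamma,\tau,\sigma)$, and hence agree. The main obstacle I expect is precisely this translation: verifying that the pull-back operation ${^\sigma(-)}$ applied to $1$- and $2$-morphisms between stacks of the form ${^\bullet \ca X'}$ corresponds, under the bijection $f_\bullet\leftrightarrow\mu(\bullet)$, to horizontal composition (on the appropriate side) with $\mu(\sigma)$. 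Once this bookkeeping is in hand, the cocycle identity reduces immediately to the given associativity axiom.
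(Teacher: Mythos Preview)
Your proposal is correct and follows essentially the same strategy as the paper: build $f_\sigma$ out of $\mu(\sigma)$ and the canonical projection ${^\sigma\ca X'}\to\ca X'$, extract $\psi_{\tau,\sigma}$ from $\alpha$, and reduce the cocycle square \eqref{galoisdiagram2morphisms} to the associativity constraint \eqref{align:first}. The only real difference is a convention: the paper sets $f_\sigma \coloneqq \mu(\sigma^{-1})\circ\can_\sigma$ directly, whereas you define $f_\sigma$ as a quasi-inverse of the map $\ca X'\to{^\sigma\ca X'}$ induced by $\mu(\sigma)$, which amounts to $f_\sigma \simeq \mu(\sigma)^{-1}\circ\can_\sigma$. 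These agree up to a canonical $2$-isomorphism (via $\alpha_{\sigma,\sigma^{-1}}$ and $\beta$), but the paper's choice has the advantage of producing a specific $1$-morphism rather than one defined only up to a choice of quasi-inverse, and it makes the bookkeeping you flag as the ``main obstacle'' concrete: the paper draws the diagram \eqref{diagram:GammaGamma} to identify $f_\sigma\circ{^\sigma f_\tau}$ with $\mu(\sigma^{-1})\circ\mu(\tau^{-1})\circ\can_{\tau\sigma}$, and then organizes the cocycle verification as a cube of $2$-morphisms whose back face is precisely \eqref{align:first} for the triple $(\sigma^{-1},\tau^{-1},\gamma^{-1})$, forcing the front face \eqref{galoisdiagram2morphisms} to commute.
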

\begin{proof}
Let $\Gamma$ act on $\ca X'$ such that \eqref{diagram:XsigmaScommutes} commutes for each $\sigma \in \Gamma$. For $\sigma \in \Gamma$, denote by $\mu(\sigma) \colon \ca X' \to \ca X'$ the attached isomorphism. By Example \ref{example:stackyness}, the $2$-isomorphisms $\alpha$ and $\gamma$ in the diagrams \eqref{GGX} are given by $2$-isomorphisms
\begin{align*}
\alpha_{\tau, \sigma} \colon \mu(\tau) \circ \mu(\sigma) \Rightarrow \mu(\tau \circ \sigma) \quad \left((\tau, \sigma) \in \Gamma \times \Gamma \right), \quad \quad \text{and} \quad \quad \beta \colon \mu(e) \Rightarrow \id_{\ca X}. 
\end{align*}
Denote by $\can_\sigma \colon {^\sigma \ca X'} \to \ca X'$ the base change of $\sigma \colon S' \to S'$ along $\ca X' \to S'$, and define $f_\sigma \coloneqq \mu(\sigma^{-1}) \circ \can_\sigma$. This gives an isomorphism $f_\sigma \colon {^\sigma \ca X'} \to \ca X'$ of stacks over $S'$. For $(\tau, \sigma) \in \Gamma \times \Gamma$, consider the following diagram, in which $\can_{\tau, \sigma} \colon {^{\tau \sigma} \ca X'} \to {^\tau \ca X'}$ is the base change of $\sigma \colon S' \to S'$ along ${^\tau \ca X'} \to S'$:
\begin{align}\label{diagram:GammaGamma}
\begin{split}
\xymatrix{
\ca X'  & & \\
{^\sigma \ca X'} \ar[u]^-{f_\sigma} \ar[r]^-{\can_\sigma} & \ca X' \ar[ul]_-{\mu(\sigma^{-1})}  \ar@{}[ur] | {\rotatebox{180}{\refsymbol} \; \alpha_{\sigma^{-1}, \tau^{-1}}} & \\
{^{\tau \sigma} \ca X'}\ar[d] \ar[u]^-{^\sigma f_\tau}  \ar[r]^-{\can_{\tau, \sigma}} & {^\tau \ca X'} \ar[u]^-{f_\tau} \ar[d] \ar[r]^{\can_\tau} & \ca X' \ar[d] \ar[ul]_-{\mu(\tau^{-1})} \ar@/_5pc/[uull]_-{\mu(\sigma^{-1}\tau^{-1})} \\
S' \ar[r]^-\sigma & S' \ar[r]^-\tau & S'. 
}
\end{split}
\end{align}
 We obtain canonical $2$-isomorphisms
 \begin{align*}
 f_\sigma \circ {^\sigma f_\tau} &= \mu(\sigma^{-1}) \circ \can_\sigma \circ {^\sigma f_\tau} \Rightarrow \mu(\sigma^{-1}) \circ f_\tau \circ \can_{\tau, \sigma} \\
 &\Rightarrow \mu(\sigma^{-1}) \circ \mu(\tau^{-1}) \circ \can_\tau \circ \can_{\tau, \sigma} \Rightarrow \mu(\sigma^{-1}) \circ \mu(\tau^{-1}) \circ \can_{\tau \sigma} \\ &\Rightarrow \mu(\sigma^{-1} \tau^{-1}) \circ \can_{\tau\sigma} = f_{\tau \sigma},
\end{align*}
whose composition we denote by 
\[
\psi_{\tau,\sigma} \colon f_\sigma \circ {^\sigma f_\tau} \Rightarrow f_{\tau \sigma}. 
\]
 Then for $(\gamma, \tau, \sigma) \in \Gamma \times \Gamma \times \Gamma$, we consider the following cube of $2$-morphisms:
{\footnotesize
\begin{align} \label{diagramoftwomorphisms}
\begin{split}
\xymatrixrowsep{1.5pc}
\xymatrixcolsep{0.07pc}
\xymatrix{
& \mu(\gamma^{-1}) \circ \mu(\tau^{-1}) \circ \mu(\sigma^{-1}) \circ \can_{\gamma \tau \sigma} \ar@{=>}[dd]  \ar@{=>}[dl] \ar@{=>}[drr]&& \\
f_\sigma \circ {^\sigma f_\tau} \circ {^{\tau \sigma} f_\gamma}\ar@{=>}[dd] \ar@{=>}[drr]&&& \mu(\gamma^{-1}) \circ \mu(\tau^{-1}\sigma^{-1}) \circ \can_{\gamma \tau \sigma} \ar@{=>}[dl] \ar@{=>}[dd] \\
& \mu(\gamma^{-1}\tau^{-1}) \circ \mu(\sigma^{-1}) \circ \can_{\gamma \tau \sigma} \ar@{=>}[dl] \ar@{=>}[drr]& f_{\tau \sigma} \circ {^{\tau \sigma} f_\gamma}\ar@{=>}[dd] & \\
f_\sigma \circ {^\sigma f_{\tau \gamma}} \ar@{=>}[drr] && &\mu(\gamma^{-1}\tau^{-1}\sigma^{-1}) \circ \can_{\gamma \tau \sigma}. \ar@{=>}[dl] \\
&& f_{\sigma \tau \gamma} & 
}
\end{split}
\end{align}
}
The face on the back of the cube \eqref{diagramoftwomorphisms} commutes because of \eqref{align:first}. In fact, each face of \eqref{diagramoftwomorphisms}, 
except possibly the face on the front, commutes. Thus, the face on the front of  \eqref{diagramoftwomorphisms} commutes as well. That is, diagram \eqref{galoisdiagram2morphisms} commutes, hence $(f_\sigma \; (\sigma \in \Gamma), \psi_{\tau, \sigma} \; ((\tau, \sigma) \in \Gamma \times \Gamma))$ is a Galois $2$-descent datum.
\end{proof}

\section{Morphisms of stacks with descent data}

Let $S' \to S$ be a faithfully flat locally finitely presented morphism of schemes. For $i = 1,2$, let $\ca X_i'$ be a stack over $S'$, equipped with a $2$-descent datum $(\phi_i, \psi_i)$, see Definition \ref{definition:2-descent}.

\begin{definition} \label{def:morphism}
A \emph{morphism}
$$
\left( \ca X_1', \phi_1, \psi_1 \right) \to \left( \ca X_2', \phi_2, \psi_2 \right) 
$$
of stacks with $2$-descent data over $S'$ consists of a pair $(f, \alpha)$, where $f \colon \ca X_1' \to \ca X_2'$ is a morphism of stacks over $S'$ and $\alpha \colon  p_2^\ast f \circ \phi_1 \Rightarrow \phi_2 \circ p_1^\ast f$ is a $2$-morphism as in the following diagram:
\begin{align*} 
\begin{split}
\xymatrixcolsep{2.8pc}	
\xymatrixrowsep{2.8pc}
\xymatrix{
p_1^\ast \ca X'_1 \ar[r]^-{\phi_1} \ar@{}[dr] | {\refsymbol} \ar[d]^{p_1^\ast f} & p_2^\ast \ca X'_1 \ar[d]^{p_2^\ast f}\\
p_1^\ast \ca X'_2 \ar[r]^-{\phi_2} & p_2^\ast \ca X'_2,
}
\end{split}
\end{align*}
such that the following diagram in the category of stacks over $S^{'''}$ is $2$-commutative (in the sense of Definition \ref{def:2com}):
{\small
\begin{align*} 
\begin{split}
\xymatrixcolsep{2.4pc}	
\xymatrixrowsep{3.4pc}
\xymatrix@1{
&p_{12}^\ast p_1^\ast \ca X'_1 \ar[dl]_{p_{12}^\ast p_1^\ast f} \ar[rr]^{p_{12}^\ast \phi_1}\ar@{=}[dd]  && p_{12}^\ast p_2^\ast \ca X'_1 \ar[dl]_-{p_{12}^\ast p_2^\ast f} \ar@{=}[rr] \ar@{}[dr]  && p_{23}^\ast p_1^\ast \ca X'_1 \ar[dl]_-{p_{23}^\ast p_1^\ast f} \ar[dd]^{p_{23}^\ast\phi_1}  \\
p_{12}^\ast p_1^\ast \ca X'_2 \ar[rr]^{\;\;\;\;\;\;\;\;\;\;\;\;\;\;\;\;\;\;\;\; p_{12}^\ast \phi_2}\ar@{=}[dd] && p_{12}^\ast p_2^\ast \ca X'_2 \ar@{=}[rr] \ar@{}[dr] && p_{23}^\ast p_1^\ast \ca X'_2 \ar[dd]^(.3){p_{23}^\ast\phi_2}&  \\
&p_{13}^\ast p_1^\ast \ca X'_1 \ar[dl]^(.3){p_{13}^\ast p_1^\ast f} \ar[rr]^{p_{13}^\ast \phi_1}&&p_{13}^\ast p_2^\ast \ca X'_1 \ar[dl]^(.4){p_{13}^\ast p_2^\ast f}&&p_{23}^\ast p_2^\ast \ca X'_1 \ar[dl]^(.4){p_{23}^\ast p_2^\ast f} \ar@{=}[ll]  \\
p_{13}^\ast p_1^\ast \ca X'_2 \ar[rr]^{p_{13}^\ast \phi_2}&&p_{13}^\ast p_2^\ast \ca X'_2&&p_{23}^\ast p_2^\ast \ca X'_2. \ar@{=}[ll]&
}
\end{split}
\end{align*}
}
Here, the $2$-morphisms in each square are the canonical ones (induced by $\alpha$).
\end{definition}

\begin{proof}[Proof of Proposition \ref{proposition:morphisms}]
This follows from \cite[Chapitre II, \S 2.1.5]{giraud} (and is also a special case of Theorem \ref{thm:breen}). 
\end{proof}


\section{Example}
Let $k$ be a field and let $k \subset k'$ be a degree two separable field extension. 
Let $\sigma \in \Gal(k'/k)$ be the generator of the Galois group $\Gal(k'/k)$ of $k'$ over $k$. Let $\ca X'$ be a stack over $k'$ equipped with a $1$-isomorphism $$\sigma \colon \ca X' \to \ca X'$$ of stacks over $k$, and a $2$-isomorphism $\alpha \colon \sigma^2 \Rightarrow \id_{\ca X'}$ between $\sigma^2$ and the identity functor. 
One obtains the descended stack $\ca X$ over $k$ by defining, for $T \in (\Sch/k)$, $\ca X(T)$ as the groupoid of pairs $(x, \varphi)$ with $x \in \ca X'(T_{k'})$ and $\varphi \colon x \to \sigma(x)$ an isomorphism such that the composition $$x \xrightarrow{\varphi} \sigma(x) \xrightarrow{\sigma(\varphi)} \sigma^2(x) \xrightarrow{\alpha^x} x$$ is the identity. There is a natural isomorphism $\ca X' \cong \ca X \times_k k'$ of stacks over $k'$. 


 
\printbibliography
\end{document}